\newtheorem{theorem}{Theorem}[section]
\newtheorem{lemma}[theorem]{Lemma}
\theoremstyle{definition}
\newtheorem{definition}[theorem]{Definition}
\newtheorem{example}[theorem]{Example}
\newtheorem{proposition}[theorem]{Proposition}
\newtheorem{corollary}[theorem]{Corollary}
\newtheorem{remark}[theorem]{Remark}
\newtheorem{conjecture}[theorem]{Conjecture}
\theoremstyle{remark}
\newcommand{\be}{\begin{equation}}
\newcommand{\ee}{\end{equation}}
\newcommand{\hooklongrightarrow}{\lhook\joinrel\longrightarrow}
\numberwithin{equation}{section}
\begin{document}

\title{Chern class inequalities on polarized manifolds and nef vector bundles}

\author{Ping Li}

\address{School of Mathematical Sciences, Tongji University, Shanghai 200092, China}

\email{pingli@tongji.edu.cn\\
pinglimath@gmail.com}
\author{Fangyang Zheng}
\address{School of Mathematical Sciences, Chongqing Normal University, Chongqing
401131, China}
\email{zheng@math.ohio-state.edu}
\thanks{The first author was partially supported by the National
Natural Science Foundation of China (Grant No. 11722109).}

 \subjclass[2010]{57R20,32Q55,53C55,57R22.}


\keywords{polarized manifold, very ample line bundle, holomorphic vector bundle, numerical effectiveness, Chern class inequality, Chern number inequality, Euler number, Gauss map, holomorphic bisectional curvature, sectional curvature.}

\begin{abstract}
This article is concerned with Chern class and Chern number inequalities on polarized manifolds and nef vector bundles.
For a polarized pair $(M,L)$ with $L$ very ample, our first main result is a family of sharp Chern class inequalities. Among them the first one is a variant of a classical result 
and the equality case of the second one is a characterization of hypersurfaces. The second main result is a Chern number inequality on it, which includes a reverse Miyaoka-Yau type inequality. The third main result is that the Chern numbers of a nef vector bundle over a compact K\"{a}hler manifold are bounded below by the Euler number. As an application, we classify compact K\"{a}hler manifolds with nonnegative bisectional curvature whose Chern numbers are all positive.
A conjecture related to the Euler number of compact K\"{a}hler manifolds with nonpositive bisectional curvature is proposed, which can be regarded as a complex analogue to the Hopf conjecture.
\end{abstract}

\maketitle


\section{Introduction}\label{introduction}
Unless otherwise stated, vector bundles, complex manifolds and their dimensions mentioned throughout this article are respectively holomorphic, compact and positive.

Positivity is a central issue in complex differential geometry and algebraic geometry. For line bundles the positivity in differential geometry and ampleness in algebraic geometry are equivalent, thanks to the Kodaira embedding theorem. Griffiths (\cite{Gr69}) and Hartshorne (\cite{Hartshorne66}) respectively generalized these two notions to higher rank vector bundles of by introducing Griffiths-positivity and ampleness. It turns out that Griffiths-positivity implies ampleness, and Griffiths conjectured in \cite{Gr69} that these two notions are equivalent, which is true when the base manifold is a projective curve (\cite{CF}). In general constructing a Griffiths-positive Hermitian metric on an ample vector bundle seems to be quite difficult. Very recently Demailly proposed in \cite{De20} a method to attack this problem.

Griffiths also raised in \cite{Gr69} the question of characterizing the polynomials in the Chern classes and Chern forms for Griffiths-positive or ample vector bundles which are positive as cohomology classes and differential forms. At the cohomology class level this was completely answered by Fulton and Lazarsfeld (\cite{FL}), who showed that the set of such polynomials for ample vector bundles is exactly the cone generated by the Schur polynomials of Chern classes (an earlier special case was obtained by Bloch and Gieseker in \cite{BG}). This consequently implies that \emph{all the Chern numbers} of ample vector bundles are \emph{positive}. At the form level, Griffiths's question is still largely unknown except in some special cases (\cite{Gu1}, \cite{Gu2}). Recently the first author examined this question in \cite{Li20} for nonnegative Hermitian vector bundles in the sense of Bott and Chern (\cite{BC}) over (not necessarily K\"{a}hler) complex manifolds and showed that the Schur polynomials of Chern forms of such vector bundles are strongly nonnegative.

Griffiths-nonnegativity can also be defined and its counterpart in algebraic geometry is numerical effectiveness (``nefness" for short), where the former implies the latter. After some earlier works (\cite{Zheng89}, \cite{CP91}, \cite{CP93}), Demailly, Peternell and Schneider (\cite{DPS}) investigated in detail the structure of K\"{a}hler manifolds with nef tangent bundles. Among other things, they showed that those inequalities of Fulton-Lazarsfeld type remain true for Chern classes of nef vector bundles on K\"{a}hler manifolds (\cite[\S 2]{DPS}). As an application, they deduced that all the Chern numbers of a nef vector bundles on an $n$-dimensional K\"{a}hler manifolds are nonnegative and bounded from above by the Chern number $c_1^n$ (\cite[Coro. 2.6]{DPS}).

This upper bound plays a crucial role in establishing the main structural theorem in \cite{DPS} as well as in some other related applications. For instance, Zhang (\cite[Thm 3]{Zhang97}) applied it to show that the canonical line bundle of an immersed projective submanifold in an abelian variety is ample if and only if its signed arithmetic genus is positive. Yang (\cite[Thm 1.2]{Yang17}) applied it and some other results in \cite{DPS} to show that the holomorphic tangent bundle of a compact K\"{a}hler manifold with nonnegative holomorphic bisectional curvature is big if and only if it is Fano, and then classified such manifolds by using Mok's uniformization theorem (\cite{Mok88}).

Also motivated by this upper bound it was shown in \cite[Thm 3.2]{Li20} that for Bott-Chern nonnegative Hermitian vector bundles the Euler number and $c_1^r$ are respectively the lower and upper bounds at the \emph{form} level, by making use of the special properties of Bott-Chern nonnegativity.
Note that the condition of Bott-Chern nonnegativity is stronger than that of Griffiths nonnegativity and hence than that of nefness (\cite[Example 4.4]{Li20}). So the method in \cite{Li20} can \emph{not} be directly carried over to nef vector bundles, as remarked in \cite[Remark 3.3]{Li20}.

\emph{The main purposes} of this article are two-folded. The \emph{first main purpose} is to apply some nonnegativity results in \cite{Li20} by finding a good geometric model. To be more precise, given a polarized manifold $(M,L)$ with $L$ very ample, we associate it to a Bott-Chern nonnegative Hermitian vector bundle. This yields a family of sharp Chern class inequalities (Theorem \ref{Chern class inequality theorem}), among which the first one is a special case of a classical result. We apply some arguments of algebro-geometric nature to characterize the second equality case (Theorem \ref{level2equality}), which turns out to be a topological characterization of all hypersurfaces in complex projective spaces. A Chern number inequality involving $L$ and the first two Chern classes of $M$ is also deduced (Theorem \ref{Chern number inequality theorem}), which includes a reverse Miyaoka-Yau type inequality (Corollary \ref{reverse Miyaoka-Yau inequality}).

The \emph{second main purpose} is, by fully utilizing the positivity of Schur polynomials, we show that the Euler number of a nef vector bundle over a K\"{a}hler manifold is indeed the lower bound (Theorem \ref{Euler number inequality}). As a major application, we classify compact K\"{a}hler manifolds with nonnegative holomorphic bisectional curvature whose Chern numbers are \emph{all positive} (Theorem \ref{main application}). In view of Theorem \ref{main application}, a conjecture (Conjecture \ref{question}) related to the Euler number of compact K\"{a}hler manifolds with \emph{nonpositive} holomorphic bisectional curvature is proposed and we provide some positive evidences to it.

The rest of the article is organized as follows.  The aforementioned main results (Theorems \ref{Chern class inequality theorem}, \ref{level2equality}, \ref{Chern number inequality theorem}, \ref{Euler number inequality}) as well as some consequences are stated in Section \ref{main results}. In Section \ref{applications and examples} some applications including Theorem \ref{main application} and examples are presented.  We propose and discuss Conjecture \ref{question} in Section \ref{a conjecture}, which can be regarded as a ``dual version" to Theorem \ref{main application}. Sections \ref{first proof} and \ref{second proof} are then devoted to the proofs of Theorems \ref{Chern class inequality theorem} and \ref{Chern number inequality theorem} and Theorems \ref{Euler number inequality} and \ref{main application} respectively. Since the proof of Theorem \ref{level2equality} is a little more involved, we postpone it to the last section, Section \ref{third proof}.

\section{Main results}\label{main results}
Let $M^n$ be an $n$-dimensional projective manifold with $L$ an ample line bundle on it. A classical result (cf. \cite[p. 159]{BS}) states that $K_M+(n+1)L$ is always nef, and is ample unless $(M,L)=\big(\mathbb{P}^n,\mathcal{O}_{\mathbb{P}^n}(1)\big)$. Here $K_M$ is the canonical line bundle of $M$.
Our first main result is a family of sharp Chern class inequalities including a variant of this classical result as a special case. To state it, let us introduce some more notations first.

The \emph{Segre classes} of a vector bundle $E$ are defined to be formal inverse of the total Chern class of $E^{\ast}$, the dual of $E$, i.e.,
\be\label{segre class}s(E) = 1 + s_1(E) + s_2(E) + \cdots := c(E^{\ast})^{-1}=(1-c_1(E)+c_2(E)-\cdots)^{-1}.\ee
That is,
$$s_1(E) = c_1(E), \ \  s_2(E)=c_1^2(E)-c_2(E),\ \  s_3(E)=c_1^3(E)-2c_1(E)c_2(E)+c_3(E),\ \ldots,$$
and so on. For simplicity we denote by $s_i(M):=s_i(TM)$ and use $L$ for its first Chern class $c_1(L)$.  A real $(k,k)$-form $\varphi$ on $M^n$ ($1\leq k\leq n$) is called \emph{nonnegative} (\cite[Ch. 3, \S 1.A]{De1}) if it can be written as
$$\varphi=(\sqrt{-1})^{k^2}
\sum_i\psi_i\wedge\overline{\psi_i},$$
where these $\psi_i$ are $(k,0)$-forms. A cohomology class $\alpha\in H^{k,k}(M;\mathbb{R})$ is called \emph{nonnegative}, denoted by $\alpha\geq0$, if it contains a nonnegative $(k,k)$-form representative. Clearly if $\alpha\geq0$ then $\int_Y\alpha\geq0$ for any $k$-dimensional subvariety $Y\subset M$.

Let $L$ be a \emph{very ample} line bundle on $M$, $\text{dim}_{\mathbb{C}}H^0(M,L)=N+1$ and $\{s_0,s_1,\ldots,s_N\}$ a basis. Very ampleness means that $M$ can be holomorphically embedded into a complex projective space as a nonsingular projective variety via the following \emph{Kodaira map}:
\be\label{linear system}\begin{split}
M&\overset{i}{\hooklongrightarrow}
\mathbb{P}\big(H^0(M,L)^{\ast}\big)\cong\mathbb{P}^N\\
x&\longmapsto[s_0(x):s_1(x):\cdots:s_N(x)];\qquad i^{\ast}\big(\mathcal{O}_{\mathbb{P}^N}(1)\big)=L.
\end{split}
\ee
We remark that in this case the image $i(M)$ is \emph{nondegenerate} in the sense that it is not contained in a hyperplane. Otherwise some linear combination of $s_i$ would vanish on $M$, a contradiction.

With the notation above understood, it comes our first result.
\begin{theorem}\label{Chern class inequality theorem}
Let $L$ be very ample on $M^n$. We have for every $k\geq 1$ the following sharp Chern class inequalities
\be\label{Chern class inequality}
\sum_{i=0}^k(-1)^i\binom{n+k}{k-i}\cdot s_i(M)\cdot L^{k-i}\geq0,
\ee
where the equality case of (\ref{Chern class inequality}) occurs if
$(M,L)=\big(\mathbb{P}^n,\mathcal{O}_{\mathbb{P}^n}(1)\big)$ or $k>\min\{n,N-n\}$.
\end{theorem}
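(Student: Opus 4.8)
The plan is to realize the left-hand side of (\ref{Chern class inequality}) as a genuine Chern/Segre class of a Bott-Chern nonnegative Hermitian vector bundle, so that the nonnegativity results of \cite{Li20} apply directly. The key geometric input is the Kodaira embedding (\ref{linear system}). Over $\mathbb{P}^N$ we have the Euler sequence, whose quotient is the tangent bundle $T\mathbb{P}^N$; pulling back along $i$ gives an exact sequence relating $i^{\ast}T\mathbb{P}^N$, the trivial bundle $\mathcal{O}^{\oplus(N+1)}$ twisted by $L=i^{\ast}\mathcal{O}_{\mathbb{P}^N}(1)$, and $\mathcal{O}$. Restricting $T\mathbb{P}^N$ to $M$ and using the normal bundle sequence $0\to TM\to i^{\ast}T\mathbb{P}^N\to \mathcal{N}\to 0$, I expect to identify a natural quotient bundle $Q$ on $M$ (essentially the restriction of $T\mathbb{P}^N$, or its relation to the normal bundle) whose Chern/Segre classes repackage the combinatorial sum $\sum_{i=0}^k(-1)^i\binom{n+k}{k-i}s_i(M)L^{k-i}$.

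Concretely, the first step is a formal Chern-class computation: I would show that the expression $\sum_{i=0}^k(-1)^i\binom{n+k}{k-i}s_i(M)L^{k-i}$ equals the degree-$2k$ part of the total Segre (or Chern) class of a bundle built from $TM$ and copies of $L$. The binomial $\binom{n+k}{k-i}$ and the power $L^{k-i}$ strongly suggest that the relevant object is $s(M)\cdot c(L^{\oplus m})$ or $s(TM\otimes L^{-1})$-type twist for an appropriate rank, since $c(L^{\oplus m})=(1+L)^m$ produces exactly such binomial coefficients. The natural candidate is the bundle $Q=i^{\ast}T\mathbb{P}^N$ restricted to $M$, which fits into $0\to TM\to Q\to \mathcal{N}\to 0$ and also into the pulled-back Euler sequence $0\to\mathcal{O}\to L^{\oplus(N+1)}\to Q\to 0$; the latter gives $s(Q)=s(TM)\cdot(1+L)^{N+1}$-type identities after accounting for the trivial sub. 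Matching the degree-$2k$ term against (\ref{Chern class inequality}) is then a bookkeeping check, and the binomial $\binom{n+k}{k-i}$ should emerge from expanding $(1+L)^{n+1}$ (note $n+1$, not $N+1$, which signals that the correct bundle is $TM\otimes L^{-1}\oplus$ a line, or equivalently a twist making its rank $n+1$).

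The second step is to invoke nonnegativity. Because $T\mathbb{P}^N$ is globally generated (indeed Griffiths-nonnegative with the Fubini-Study metric) and $L$ is very ample hence globally generated, the relevant bundle $Q$ carries a Bott-Chern nonnegative Hermitian metric in the sense of \cite{BC}; the quotient metric on a globally generated bundle is Bott-Chern nonnegative. By \cite[Thm 3.2]{Li20} (and the strong nonnegativity of Schur polynomials of Chern forms established there), the appropriate Schur-type cohomology class of $Q$ is nonnegative, and since (\ref{Chern class inequality}) is precisely such a class this gives inequality (\ref{Chern class inequality}). The main obstacle will be getting the precise bundle and the precise twist right so that the degree-$2k$ Schur/Segre class matches the stated combinatorial sum exactly, and verifying that this particular class is covered by the nonnegativity theorem of \cite{Li20} rather than merely a Schur polynomial in the naive sense.

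Finally, for the equality cases I would argue as follows. When $(M,L)=(\mathbb{P}^n,\mathcal{O}_{\mathbb{P}^n}(1))$, the embedding is the identity $\mathbb{P}^n\hookrightarrow\mathbb{P}^n$, the normal bundle is trivial, and a direct computation of $s(\mathbb{P}^n)=(1+L)^{-(n+1)}$ against the combinatorial sum should make the left-hand side of (\ref{Chern class inequality}) vanish identically in every degree $k\ge 1$; this is a short explicit check using $s(\mathbb{P}^n)\cdot(1+L)^{n+1}=1$, which annihilates all positive-degree terms. For the range $k>\min\{n,N-n\}$: if $k>n$ then (\ref{Chern class inequality}) lives in $H^{2k,2k}$ with $2k>2n=\dim_{\mathbb{R}}M$, so it is automatically zero; if $k>N-n$, then $k$ exceeds the rank of the normal bundle $\mathcal{N}$ (which has rank $N-n$), and the corresponding Segre/Chern class of $\mathcal{N}$ vanishes in that degree, forcing equality. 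I expect the $k>N-n$ case to be the one requiring the cleanest identification of the constructed bundle with the normal bundle, so the precise bookkeeping in the first step feeds directly into the equality analysis.
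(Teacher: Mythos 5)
Your proposal is correct and follows essentially the same route as the paper: the bundle you are circling around is $\mathcal{N}\otimes L^{-1}$ (the normal bundle twisted down by $L$), which is exactly the pullback $\gamma^{\ast}Q$ of the universal quotient bundle under the Gauss map used in the paper; it is globally generated because the $\mathcal{O}(-1)$-twisted Euler sequence exhibits $T\mathbb{P}^N\otimes\mathcal{O}(-1)$ as a quotient of $\underline{\mathbb{C}}^{N+1}$, and its total Chern class equals $s\big((T^{\ast}M\oplus\underline{\mathbb{C}})\otimes L\big)$, which the tensor-product formula for Segre classes of a rank-$(n+1)$ bundle turns into precisely the stated sum. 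Your equality analysis also matches the paper's, with the small caveat that what vanishes for $k>N-n$ is the \emph{Chern} class of this rank-$(N-n)$ quotient (Segre classes need not vanish above the rank).
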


Note that for $k=1$, the left hand side of (\ref{Chern class inequality}) is $(n+1)L+K_M$. In this case,  the inequality $(n+1)L+K_M \geq 0$ is a special case of the aforementioned classical result, where $L$ is only needed to be ample, and the equality $K_M+(n+1)L=0$ occurs if and only if $(M,L)=\big(\mathbb{P}^n,\mathcal{O}_{\mathbb{P}^n}(1)\big)$ due to a classical result of Kobayashi and Ochiai (\cite{KO}).

For $k=2$, the left hand side of (\ref{Chern class inequality}) becomes
$$
\frac{1}{2}(n+2)(n+1)L^2-(n+2)Lc_1+c_1^2-c_2.$$
It turns out that the equality case of this gives exactly a characterization of hypersurfaces. In other words we have the following
\begin{theorem}\label{level2equality}
Let $L$ be very ample on $M^n$. Then by the $k=2$ case of Theorem \ref{Chern class inequality theorem} we have
\be\label{class inequality-1}
\frac{1}{2}(n+2)(n+1)L^2-(n+2)Lc_1+c_1^2-c_2 \geq 0.\ee
The equality occurs if and only if either $(M,L)=\big(\mathbb{P}^n,\mathcal{O}_{\mathbb{P}^n}(1)\big)$ or the Kodaira map (\ref{linear system}) embeds $M^n$ as a hypersurface in ${\mathbb P}^{n+1}$ (i.e., $N-n=1$) of degree $L^n$.
\end{theorem}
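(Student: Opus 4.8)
The plan is to reinterpret the left-hand side of (\ref{class inequality-1}) as the second Chern class of a globally generated vector bundle and then to read off the equality case from the geometry of the Gauss map. The construction underlying Theorem \ref{Chern class inequality theorem} realizes the degree-$k$ expression as the $k$-th Segre class $s_k(J^1L)$ of the first jet bundle $J^1L$, which is globally generated precisely because $L$ is very ample. Writing $r:=N-n$ for the codimension and combining the normal bundle sequence $0\to TM\to i^\ast T\mathbb{P}^N\to N_{M/\mathbb{P}^N}\to0$ with the Euler sequence, a direct computation gives for $k=2$
\be
\frac{1}{2}(n+2)(n+1)L^2-(n+2)Lc_1+c_1^2-c_2=s_2(J^1L)=c_2\big(N_{M/\mathbb{P}^N}\otimes L^{-1}\big),
\ee
where $\mathcal{E}:=N_{M/\mathbb{P}^N}\otimes L^{-1}$ is a globally generated bundle of rank $r$ (a quotient of the trivial bundle $H^0(M,L)^\ast\otimes\mathcal{O}_M$). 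The easy direction is then immediate: for $(\mathbb{P}^n,\mathcal{O}_{\mathbb{P}^n}(1))$ the jet bundle is trivial, and for a hypersurface $M\subset\mathbb{P}^{n+1}$ the bundle $\mathcal{E}$ has rank $r=1$, so $c_2(\mathcal{E})=0$ in both cases; moreover such a hypersurface has degree $L^n$.

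For the converse I would suppose the class $c_2(\mathcal{E})$ vanishes and extract geometry from this. Since $\mathcal{E}$ is globally generated of rank $r$, the Thom--Porteous formula identifies $c_2(\mathcal{E})$ with the class of the degeneracy locus $D$ where $r-1$ general sections of $\mathcal{E}$ become linearly dependent; for general sections this locus is empty or of pure codimension $2$, and $c_2(\mathcal{E})=0$ forces $D=\emptyset$, since otherwise $c_2(\mathcal{E})\cdot L^{n-2}=\deg_L D>0$. The decisive point is to translate ``$r-1$ general sections are everywhere independent'' back into projective geometry. Using that the $N+1$ sections of $\mathcal{E}$ induced from $H^0\big(\mathbb{P}^N,\,T\mathbb{P}^N\otimes\mathcal{O}_{\mathbb{P}^N}(-1)\big)\cong H^0(M,L)^\ast$ globally generate $\mathcal{E}$, a short computation with the Euler sequence shows that the section attached to a point $p\in\mathbb{P}^N$ vanishes at $x\in M$ exactly when $p$ lies on the embedded projective tangent space $\mathbb{T}_xM$. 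Hence the degeneracy locus of the sections attached to general points $p_1,\dots,p_{r-1}$ spanning a general $(r-2)$-plane $\Pi$ is
\be
D_\Pi=\{x\in M:\ \mathbb{T}_xM\cap\Pi\neq\emptyset\},
\ee
and $D_\Pi=\emptyset$ for general $\Pi$ says precisely that the tangent variety $\tau(M)=\bigcup_{x\in M}\mathbb{T}_xM$ misses a general $(r-2)$-plane, i.e. $\dim\tau(M)\le n+1$.

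The final step, which I expect to be the main obstacle, is to show that a smooth nondegenerate $M^n\subset\mathbb{P}^N$ with $\dim\tau(M)\le n+1$ must be either $\mathbb{P}^n$ or a hypersurface. Here I would invoke the fact, a consequence of Zak's theorem on tangencies (equivalently the finiteness and non-degeneracy of the Gauss map of a smooth nondegenerate variety), that for $M\neq\mathbb{P}^n$ one has $\dim\tau(M)=\min\{2n,N\}$. For $n\ge2$ this can be $\le n+1$ only when $N\le n+1$, which forces the codimension $r\le1$, so $M$ is a hypersurface in $\mathbb{P}^{n+1}$ of degree $L^n$; the remaining possibility $\dim\tau(M)=n$ gives $\tau(M)=M$, hence $\mathbb{T}_xM\subset M$ and $M=\mathbb{P}^n$. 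The case $n=1$ is degenerate, since then the left-hand side of (\ref{class inequality-1}) is a $(2,2)$-class on a curve and vanishes automatically, so the statement is of interest only for $n\ge2$. The genuine difficulty is thus concentrated entirely in the expected dimensionality of the tangent variety; granting it, the equivalence between the equality case and the dichotomy ``$\mathbb{P}^n$ or hypersurface'' follows cleanly.
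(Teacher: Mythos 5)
Your reduction of the equality case to the statement $\dim\tau(M)\leq n+1$ is correct, and it takes a genuinely different (and arguably cleaner) route than the paper's: the paper never mentions degeneracy loci, but instead introduces the second osculating spaces $\mathbb{T}^2_p(M)$, computes the second Segre \emph{form} of the induced metric on $\gamma^{\ast}S$ in local coordinates, and shows that its vanishing forces the Hessian matrices $H^{n+1},\dots,H^N$ at each point to span a space of dimension at most one, whence $\dim\mathbb{T}^2_p(M)\leq n+1$ and then (via the lemma of Bompiani, \cite{BF}) $\dim\mathrm{Tan}(M)\leq n+1$. Your Thom--Porteous argument reaches the same intermediate conclusion purely cohomologically, and your identification of the degeneracy locus of the sections attached to a general $(N-n-2)$-plane $\Pi$ with $\{x\in M:\mathbb{T}_xM\cap\Pi\neq\emptyset\}$ is correct.

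The genuine gap is in the last step. The fact you propose to invoke --- that a smooth nondegenerate $M^n\subset\mathbb{P}^N$ other than a linear space satisfies $\dim\tau(M)=\min\{2n,N\}$ --- is \emph{false}, and it does not follow from Zak's theorem on tangencies or from finiteness of the Gauss map (which controls $\dim\gamma(M)$, not $\dim\tau(M)$). A standard counterexample is the Segre embedding $\mathbb{P}^2\times\mathbb{P}^2\subset\mathbb{P}^8$: here $n=4$, $N=8$, the secant variety is the determinantal cubic hypersurface of dimension $7$, and since $7\neq 2n+1$ the Fulton--Hansen theorem forces $\tau(M)=\mathrm{Sec}(M)$, so $\dim\tau(M)=7<\min\{2n,N\}=8$; the Severi varieties give further examples. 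What is actually true, and what the paper uses, is the combination of two nontrivial theorems: Fulton--Hansen's connectedness theorem (if $\dim\tau(M)<2n$ then $\tau(M)=\mathrm{Sec}(M)$) together with Zak's linear normality theorem ($\dim\mathrm{Sec}(M)\geq\frac{3}{2}n+1$ unless $\mathrm{Sec}(M)=\mathbb{P}^N$). These yield, for $n\geq 2$ and $M$ not linear, the weaker but sufficient bound $\dim\tau(M)\geq\min\{\frac{3}{2}n+1,\,N\}$, which combined with $\dim\tau(M)\leq n+1$ forces $\tau(M)=\mathbb{P}^N$ and hence $N=n+1$. So your overall architecture survives, but the single sentence you flag as ``the main obstacle'' cannot be discharged by the fact you cite; it genuinely requires the Fulton--Hansen and Zak inputs.
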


When $c_1=0$, i.e., for Calabi-Yau manifolds, the expressions in (\ref{Chern class inequality}) can be simplified a bit to lead to
\begin{corollary}\label{second corollary}
Let $M^n$ be a Calabi-Yau manifold with $n\geq 2$. Then for any very ample line bundle $L$ on $M^n$, it holds
\begin{eqnarray}
&& \label{c1=0}\frac{1}{2}(n+2)(n+1) L^2 - c_2 \ \geq \ 0; \\
&& \frac{1}{6}(n+3)(n+2)(n+1) L^3 - (n+3) Lc_2 - c_3 \ \geq \ 0.
\end{eqnarray}
The first equality case holds if and only if the Kodaira map (\ref{linear system}) embeds $M^n$ in ${\mathbb P}^{n+1}$, and in this case the degree of the hypersurface is necessarily $L^n=n+2$.
\end{corollary}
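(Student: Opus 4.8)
The plan is to obtain both inequalities as the $c_1=0$ specializations of Theorem \ref{Chern class inequality theorem}, and then to settle the first equality case by combining Theorem \ref{level2equality} with the adjunction formula.

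First I would impose the Calabi-Yau condition $c_1=0$ on the Segre classes entering (\ref{Chern class inequality}). From $s_1(M)=c_1=0$, $s_2(M)=c_1^2-c_2=-c_2$ and $s_3(M)=c_1^3-2c_1c_2+c_3=c_3$, the $k=2$ instance of (\ref{Chern class inequality}) collapses to $\binom{n+2}{2}L^2-c_2\geq0$, which is exactly (\ref{c1=0}), while the $k=3$ instance collapses to $\binom{n+3}{3}L^3-(n+3)Lc_2-c_3\geq0$, and expanding the binomial coefficient $\binom{n+3}{3}=\frac{1}{6}(n+3)(n+2)(n+1)$ produces the second displayed inequality. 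The vanishing of the $s_1$-term is precisely what removes the intermediate $L^2$ and $Lc_2$ contributions, so no extra computation is required here.

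Next I would treat the equality case of (\ref{c1=0}), which is literally the $k=2$ equality controlled by Theorem \ref{level2equality}. That theorem leaves exactly two options: either $(M,L)=\big(\mathbb{P}^n,\mathcal{O}_{\mathbb{P}^n}(1)\big)$, or the Kodaira map (\ref{linear system}) embeds $M$ as a hypersurface in $\mathbb{P}^{n+1}$ of degree $L^n$. The first option is incompatible with the hypothesis, because $\mathbb{P}^n$ has $K_{\mathbb{P}^n}=\mathcal{O}_{\mathbb{P}^n}(-n-1)\neq0$ and so is not Calabi-Yau; hence equality forces the hypersurface alternative, yielding the stated ``if and only if''.

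Finally, to pin down the degree I would invoke the adjunction formula: a smooth hypersurface $M$ of degree $d$ in $\mathbb{P}^{n+1}$ has $K_M=\mathcal{O}_{\mathbb{P}^{n+1}}(d-n-2)\big|_M$, so the Calabi-Yau condition $K_M=0$ forces $d=n+2$. Since $L=\mathcal{O}_{\mathbb{P}^{n+1}}(1)\big|_M$ restricts the hyperplane class, its top self-intersection $L^n$ equals the degree $d$, whence $L^n=n+2$. No step here presents a genuine obstacle; the corollary is essentially bookkeeping layered on top of the two theorems, the only point deserving a moment of care being the exclusion of the projective-space case, which the Calabi-Yau assumption rules out automatically.
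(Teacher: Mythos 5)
Your proposal is correct and follows essentially the same route the paper intends: the two inequalities are the $c_1=0$ specializations of the $k=2,3$ cases of Theorem \ref{Chern class inequality theorem} (using $s_2=-c_2$, $s_3=c_3$), and the equality case is Theorem \ref{level2equality} with the $\mathbb{P}^n$ alternative excluded by $c_1\neq 0$ and the degree pinned down by adjunction, exactly as in Example \ref{example c1}(1).
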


Our next result is the following Chern number inequality for $(M,L)$.
\begin{theorem}\label{Chern number inequality theorem}
Suppose $L$ is very ample on $M^n$. Then the following Chern number inequality holds:
\be\label{c1c2new}
\big[\frac{n(n+1)}{2}L^2-nc_1L+c_2\big]
\big[-c_1+(n+1)L\big]^{n-2}\leq\big[-c_1+(n+1)L\big]^{n}.
\ee
\end{theorem}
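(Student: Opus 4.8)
The plan is to reduce the stated Chern number inequality to the two Chern class inequalities of Theorem \ref{Chern class inequality theorem} (the cases $k=1$ and $k=2$), together with the fact that a wedge product of nonnegative forms is again nonnegative. Write $A := -c_1+(n+1)L$ and $B := \frac{n(n+1)}{2}L^2-nc_1L+c_2$, so that the desired inequality (\ref{c1c2new}) reads $B\cdot A^{n-2}\le A^n$, i.e.
$$\int_M A^{n-2}\bigl(A^2-B\bigr)\ge 0.$$
First I would record that the $k=1$ case of Theorem \ref{Chern class inequality theorem} gives $A=(n+1)L-c_1\ge 0$, so $A$ admits a nonnegative $(1,1)$-form representative.

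Next I would compute $A^2-B$ and check that it is \emph{exactly} the left-hand side of (\ref{class inequality-1}). Expanding $A^2=(n+1)^2L^2-2(n+1)c_1L+c_1^2$ and subtracting $B$ collects the coefficients to
$$A^2-B=\frac{(n+1)(n+2)}{2}L^2-(n+2)c_1L+c_1^2-c_2,$$
which is precisely the $k=2$ expression. Hence the $k=2$ case of Theorem \ref{Chern class inequality theorem} (equivalently the inequality (\ref{class inequality-1}) of Theorem \ref{level2equality}) yields $A^2-B\ge 0$, so $A^2-B$ carries a nonnegative $(2,2)$-form representative.

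The remaining point is that the product of nonnegative classes is nonnegative, so that $A^{n-2}\cdot(A^2-B)\ge 0$. Fixing nonnegative representatives $\omega$ of $A$ and $\eta$ of $A^2-B$, the form $\omega^{n-2}\wedge\eta$ represents $A^{n-2}(A^2-B)$, and I claim it is a nonnegative $(n,n)$-form. This follows from the elementary identity
$$\bigl[(\sqrt{-1})^{k^2}\psi\wedge\overline{\psi}\bigr]\wedge\bigl[(\sqrt{-1})^{l^2}\phi\wedge\overline{\phi}\bigr]=(\sqrt{-1})^{(k+l)^2}(\psi\wedge\phi)\wedge\overline{(\psi\wedge\phi)}$$
for a $(k,0)$-form $\psi$ and an $(l,0)$-form $\phi$, since $(\sqrt{-1})^{2kl}=(-1)^{kl}$ exactly compensates the sign produced by commuting $\overline{\psi}$ past $\phi$; summing over the defining decompositions of $\omega$ and $\eta$ (and iterating for the power $\omega^{n-2}$) shows the wedge is again of the required form. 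Integrating the nonnegative top form $\omega^{n-2}\wedge\eta$ over $M^n$ then gives $\int_M A^{n-2}(A^2-B)\ge 0$, which rearranges to (\ref{c1c2new}).

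I expect the only genuinely delicate step to be this last closure property: for the weaker notions of positivity a wedge of two positive forms need not be positive, so it is essential that the nonnegativity used here is preserved under products, which the displayed sign computation confirms. Everything else is the algebraic identity $A^2-B=$ (the level-$2$ expression) plus the two already-established inequalities.
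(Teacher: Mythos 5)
Your proof is correct, and it takes a somewhat different route from the paper's. The paper proves (\ref{c1c2new}) by observing from the exact sequence (\ref{exact seq}) that $-c_1+(n+1)L=c_1\big(\gamma^{\ast}(S^{\ast})\big)$ and $\frac{n(n+1)}{2}L^2-nc_1L+c_2=c_2\big(\gamma^{\ast}(S^{\ast})\big)$, and then quoting the inequality $c_2(E)c_1(E)^{n-2}\leq c_1(E)^n$ for the globally generated (hence Bott--Chern nonnegative, hence nef) bundle $E=\gamma^{\ast}(S^{\ast})$ from \cite[Thm 3.2]{Li20} (or \cite[Coro. 2.6]{DPS}). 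You instead make the theorem a formal consequence of Theorem \ref{Chern class inequality theorem}: your identification $A^2-B=\frac{(n+1)(n+2)}{2}L^2-(n+2)c_1L+c_1^2-c_2$ is exactly the statement that $A=c_1(\gamma^{\ast}Q)$ and $A^2-B=c_2(\gamma^{\ast}Q)$ (the $k=1,2$ cases of (\ref{key})), so $A^n-BA^{n-2}=\int_M c_1(\gamma^{\ast}Q)^{n-2}c_2(\gamma^{\ast}Q)$, and you then prove directly that the cone of classes admitting representatives $(\sqrt{-1})^{k^2}\sum_i\psi_i\wedge\overline{\psi_i}$ is closed under wedge products; your sign computation $(\sqrt{-1})^{(k+l)^2}=(\sqrt{-1})^{k^2+l^2}(-1)^{kl}$ is exactly right, and you correctly flag that this closure fails for weaker positivity notions but holds for the one the paper uses. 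In effect you have reproved, in this special case, the mechanism behind the cited result of \cite{Li20}, so the two arguments rest on the same underlying positivity; what your version buys is self-containedness within the paper (Theorem \ref{Chern number inequality theorem} becomes a corollary of Theorem \ref{Chern class inequality theorem} plus elementary multilinear algebra), while the paper's version is shorter given the external citation and places the statement in the general framework of Chern number inequalities for nef bundles.
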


Fujita's very ampleness conjecture (\cite{Fuj}) asserts that $K_M+(n+2)L$ is very ample whenever $L$ is ample. This conjecture holds true if $L$ is further assumed to be globally generated (\cite[Thm 1.1]{Keeler}). So replacing $L$ in (\ref{c1c2new}) with $-c_1+(n+2)L$ and with some calculations (see Example \ref{calculate example} for details), we have
\begin{corollary}\label{reverse Miyaoka-Yau inequality}
Suppose $L$ is ample and globally generated on $M^n$. Then
\be\label{corollary inequality}
\big[-nc_1^2+2(n+1)c_2\big]
\big[-c_1+(n+1)L\big]^{n-2}\leq(n+2)^3\big[-c_1+(n+1)L\big]^{n}.
\ee
In particular, if $K_M$ is ample and globally generated, we have
\be\label{yau opposite}c_2(-c_1)^{n-2}\leq\frac{(n+2)^5+n}{2(n+1)}(-c_1)^n.\ee
\end{corollary}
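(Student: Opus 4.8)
The plan is to deduce both inequalities from Theorem \ref{Chern number inequality theorem} by a single well-chosen substitution, exactly as suggested in the sentence preceding the statement. Since $L$ is ample and globally generated, Keeler's confirmation of Fujita's very ampleness conjecture in this case (\cite[Thm 1.1]{Keeler}) guarantees that the line bundle $L':=K_M+(n+2)L$, whose first Chern class is $-c_1+(n+2)L$, is \emph{very ample}. Hence Theorem \ref{Chern number inequality theorem} applies verbatim to the polarized pair $(M,L')$, and the entire derivation of (\ref{corollary inequality}) reduces to replacing $L$ by $-c_1+(n+2)L$ in (\ref{c1c2new}) and simplifying.

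The substitution is engineered so that the nef class governing (\ref{c1c2new}) transforms multiplicatively. Writing $B:=-c_1+(n+1)L$, a direct computation gives
\be
-c_1+(n+1)L' = -c_1+(n+1)\big(-c_1+(n+2)L\big) = (n+2)\,B,
\ee
so that every power $[-c_1+(n+1)L']^{\,j}$ becomes $(n+2)^{j}B^{j}$; this is the key point that makes all the dimensional factors cancel cleanly. Next I would expand the remaining bracket with $L$ replaced by $L'$, namely $\tfrac{n(n+1)}{2}(L')^2-nc_1L'+c_2$, and regroup it against $B^2=c_1^2-2(n+1)c_1L+(n+1)^2L^2$. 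The $L^2$ and $c_1L$ coefficients turn out to be exactly proportional to those of $B^2$, so the bracket collapses to
\be
\tfrac{n(n+1)}{2}(L')^2-nc_1L'+c_2 = \tfrac{n(n+2)^2}{2(n+1)}\,B^2 - \tfrac{n}{2(n+1)}\,c_1^2 + c_2,
\ee
with only the $c_1^2$ and $c_2$ terms surviving outside the multiple of $B^2$.

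Feeding these two identities into (\ref{c1c2new}) for $L'$, dividing out the common factor $(n+2)^{n-2}$, and cancelling the leading $B^n$ terms, the coefficient of $B^n$ on the right collapses to $(n+2)^2\cdot\tfrac{n+2}{2(n+1)}=\tfrac{(n+2)^3}{2(n+1)}$; multiplying through by $2(n+1)$ then yields (\ref{corollary inequality}) exactly. For the special case (\ref{yau opposite}) I would apply the now-established (\ref{corollary inequality}) to the line bundle $L=K_M$ itself, which is legitimate precisely because $K_M$ is assumed ample and globally generated. Here $c_1(K_M)=-c_1$, so $B=-c_1+(n+1)(-c_1)=-(n+2)c_1$; substituting $B^{n}=(n+2)^n(-c_1)^n$ and $B^{n-2}=(n+2)^{n-2}(-c_1)^{n-2}$ into (\ref{corollary inequality}), using $c_1^2(-c_1)^{n-2}=(-c_1)^n$, and solving for $c_2(-c_1)^{n-2}$ produces the constant $\tfrac{(n+2)^5+n}{2(n+1)}$ and hence (\ref{yau opposite}).

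The only genuine obstacle is the bookkeeping: one must verify that the $L^2$ and $c_1L$ contributions of the substituted bracket line up perfectly with $B^2$ so that no cross terms leak out, and that the two $B^n$ terms subtract to leave precisely $(n+2)^3$. These are routine but unforgiving polynomial identities in $n$, and the cleanliness of the final constants is really a consequence of the multiplicative identity $-c_1+(n+1)L'=(n+2)B$ noted above, which is what allows every factor of $(n+2)$ to be tracked and cancelled explicitly.
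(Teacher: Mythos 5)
Your proposal is correct and follows essentially the same route as the paper's own derivation (carried out in Example \ref{calculate example}): invoke Keeler's theorem to make $K_M+(n+2)L$ very ample, substitute it into (\ref{c1c2new}), exploit the identity $-c_1+(n+1)\big[-c_1+(n+2)L\big]=(n+2)\big[-c_1+(n+1)L\big]$ together with the regrouping of the quadratic bracket, and finally set $L=K_M$ to obtain (\ref{yau opposite}). All the stated identities and constants check out.
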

\begin{remark}~
\begin{enumerate}
\item
Yau's celebrated Chern number inequality (\cite{Yau}) says that if $K_M$ is ample, then $$\frac{n}{2(n+1)}(-c_1)^n\leq c_2(-c_1)^{n-2},$$
to which (\ref{yau opposite}) can be viewed as a complement when $K_M$ is further assumed to be globally generated.
Note that the inequality (\ref{yau opposite}) is \emph{not} optimal due to the method we employ (see Corollary \ref{reverse Miyaoka-Yau inequality2} below). Even so, we are unaware of any this kind of \emph{reverse} Miyaoka-Yau type inequality in the literature, to the  best of our knowledge.

\item
In general, if $L$ is ample and $aL$ is very ample for some $a\in{\mathbb Q}^+$, we may from (\ref{c1c2new}) have an inequality involving an extra constant $a$. It is also known by the work of Demailly (\cite{Dem}) that $2K_M+12n^nL$ is always very ample for any ample $L$. So we may always take $a=2+12n^n$ when $K_M$ is ample, which of course also leads to a reverse Miyaoka-Yau inequality but with a very large constant.
\end{enumerate}
\end{remark}

As a direct corollary of Theorem \ref{level2equality}, we have the following
\begin{corollary}\label{reverse Miyaoka-Yau inequality2}
Let $M^n$ be projective manifold with $c_1<0$ (or $c_1>0$) and  $a\in\mathbb{Q}^+$ (or $a\in\mathbb{Q}^-$) such that $L=aK_M$ is a very ample line bundle. Replacing $L$ in (\ref{class inequality-1}) with $-ac_1$ (or $ac_1$) and multiplying by $(-c_1)^{n-2}$ (or $c_1^{n-2}$), it holds
\be\label{corollary inequality2}
\big[ \frac{1}{2}(n+2)(n+1)a^2 +(n+2)a +1\big] (\varepsilon c_1)^n \geq c_2 (\varepsilon c_1)^{n-2},
\ee
where $\varepsilon = -1$ (or $1$). If the equality holds, then (see Example \ref{example c1}) $\frac{1}{a}=\varepsilon(n+2-L^n)$, and the Kodaira map (\ref{linear system}) of $L=aK_M$ embeds $M^n$ as a hypersurface in ${\mathbb P}^{n+1}$ with degree $L^n$. In particular, for those with \emph{very ample} $K_M$, it holds that
\be\label{corollary inequality3}
 \frac{1}{2}(n^2+5n+8) (- c_1)^n \geq c_2 (- c_1)^{n-2},
\ee
and the equality occurs if and only if $M^n\subset {\mathbb P}^{n+1}$ is a hypersurface of degree $n+3$.
\end{corollary}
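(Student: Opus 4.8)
The plan is to feed the special polarization $L=aK_M$ into the class inequality (\ref{class inequality-1}) and then pair the resulting $(2,2)$-class with a power of $\pm c_1$. First I would record that $L=aK_M$ means $c_1(L)=a\,c_1(K_M)=-a c_1$ at the level of cohomology, so substituting $L=-ac_1$ into the left-hand side of (\ref{class inequality-1}) and collecting the $c_1^2$ terms rewrites (\ref{class inequality-1}) as the nonnegative $(2,2)$-class
\[
\alpha:=\big[\tfrac{1}{2}(n+2)(n+1)a^{2}+(n+2)a+1\big]c_1^{2}-c_2\ \geq\ 0 .
\]
The bracket is exactly the coefficient appearing in (\ref{corollary inequality2}), so all that remains for the inequality itself is to promote this class inequality to a numerical one.

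For that, I would observe that $\varepsilon c_1$ is always ample: it equals $c_1(K_M)$ when $c_1<0$ and $c_1(-K_M)$ when $c_1>0$, and in either regime $\pm K_M$ is a positive rational multiple of the very ample $L$, hence ample. Thus $\varepsilon c_1$ is a K\"ahler class, $(\varepsilon c_1)^{n-2}$ is a nonnegative $(n-2,n-2)$-class, and pairing the nonnegative class $\alpha$ against it gives a nonnegative number. Using the uniform identity $c_1^{2}(\varepsilon c_1)^{n-2}=(\varepsilon c_1)^{n}$ (valid since $\varepsilon^{\pm2}=1$), this pairing is precisely (\ref{corollary inequality2}). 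The inequality (\ref{corollary inequality3}) is then just the case $a=1$ (so $L=K_M$ very ample, $c_1<0$, $\varepsilon=-1$), where the bracket collapses to $\tfrac{1}{2}(n^2+5n+8)$.

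For the equality statement the crucial point is to upgrade equality in the numerical inequality (\ref{corollary inequality2}) to the vanishing of the cohomology class $\alpha$. Representing $\varepsilon c_1$ by a K\"ahler form $\omega$ and $\alpha$ by the nonnegative $(2,2)$-form $\varphi=(\sqrt{-1})^{4}\sum_i\psi_i\wedge\overline{\psi_i}$ furnished by Theorem \ref{Chern class inequality theorem}, the top form $\varphi\wedge\omega^{n-2}$ is pointwise a nonnegative multiple of the volume form, and its integral vanishes; hence $\varphi\wedge\omega^{n-2}\equiv0$, which forces $\varphi\equiv0$ and so $\alpha=0$. Equivalently, equality holds in the class inequality (\ref{class inequality-1}) for the very ample bundle $aK_M$, and Theorem \ref{level2equality} then realizes $M^n$, via the Kodaira map of $aK_M$, as a hypersurface in $\mathbb{P}^{n+1}$ of degree $L^n$ (the possibility $(\mathbb{P}^n,\mathcal{O}_{\mathbb{P}^n}(1))$ is automatically excluded in the first regime, where $c_1<0$). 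Finally, for such a hypersurface the adjunction formula $K_M=\mathcal{O}_{\mathbb{P}^{n+1}}(L^n-n-2)|_M$ together with $L=aK_M$ determines $a$ in terms of $L^n$, giving $\tfrac{1}{a}=\varepsilon(n+2-L^n)$; in the distinguished case $a=1$ (very ample $K_M$) this forces $L^n=n+3$, so the extremal $M^n$ is a hypersurface of degree $n+3$.

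The step I expect to be the main obstacle is this equality analysis: justifying that a nonnegative $(2,2)$-class pairing to zero against the $(n-2)$-th power of a K\"ahler class must itself vanish. This is what lets one replace the a priori weaker numerical equality by the sharp class equality and thereby invoke the hypersurface characterization of Theorem \ref{level2equality}; the verification that $\varphi\wedge\omega^{n-2}$ is pointwise nonnegative and faithful on the nonnegative representatives rests on the explicit structure of $\varphi$. By contrast, the substitution, the choice of the ample class $\varepsilon c_1$, and the two sign regimes are routine bookkeeping once this is established.
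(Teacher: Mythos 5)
Your proposal is correct and follows essentially the same route as the paper: substitute $L=-ac_1$ (resp.\ $ac_1$) into (\ref{class inequality-1}), pair the resulting nonnegative $(2,2)$-class with the ample class $(\varepsilon c_1)^{n-2}$, and invoke Theorem \ref{level2equality} (via Example \ref{example c1}) for the equality case. Your explicit argument that numerical equality forces the class equality --- the nonnegative $(2,2)$-representative wedged with $\omega^{n-2}$ is pointwise nonnegative and vanishes only where the representative does --- is precisely the step the paper leaves implicit, and it is sound.
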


Next, for two given positive integers $k$ and $r$, let us denote by $\Gamma(k,r)$ the set of partitions $\lambda=(\lambda_1,\lambda_2,\ldots,\lambda_k)$ of weight $k$ by nonnegative integers $\lambda_j\leq r$:
\be\label{partition} r\geq\lambda_1\geq\lambda_2\geq\cdots\geq\lambda_k\geq0,\qquad
\sum_{j=1}^{k}\lambda_j=k.\ee

Let $E$ be a rank $r$ vector bundle. For each partition $\lambda\in\Gamma(k,r)$, the Chern monomial $c_{\lambda}(E)$ is defined by
$$c_{\lambda}(E):=\prod_{j=1}^kc_{\lambda_j}(E)\in H^{k,k}(M;\mathbb{Z}).$$

With this notation understood, we come to our third main result.
\begin{theorem}\label{Euler number inequality}
Let $E$ be a rank $r$ nef vector bundle on an $n$-dimensional compact K\"{a}hler manifold $(M,\omega)$, and $c_i(E)$ the $i$-th Chern classes of $E$ \rm($0\leq i\leq r$\rm). Then for each $1\leq k\leq n$ and $\lambda\in\Gamma(k,r)$ we have
\be\label{main inequality}\int_Mc_{\lambda}(E)\wedge[\omega]^{n-k}\geq\int_Mc_k(E)\wedge[\omega]^{n-k}
\geq0.\ee
In particular, all the Chern numbers of $E$ are bounded below by the (nonnegative) Euler number $\int_Mc_n(E)$.
\end{theorem}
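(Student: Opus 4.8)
The plan is to reduce everything to the positivity of Schur polynomials of nef bundles, which is available on compact K\"{a}hler manifolds by the Fulton--Lazarsfeld type results of Demailly, Peternell and Schneider (\cite[\S 2]{DPS}). Writing $P_{\nu}(E)$ for the Schur polynomial of the Chern classes of $E$ attached to a partition $\nu$ (not to be confused with the Segre classes $s_i$), their result gives that for every $\nu$ with $|\nu|=k$ one has $\int_M P_{\nu}(E)\wedge[\omega]^{n-k}\geq 0$. Since the single Chern class $c_k(E)$ is itself the Schur polynomial $P_{(1^k)}(E)$ attached to the one-column partition $(1^k)$, the right-hand inequality $\int_M c_k(E)\wedge[\omega]^{n-k}\geq 0$ is immediate; this already shows the Euler number $\int_M c_n(E)$ is nonnegative upon taking $k=n$. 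It remains to prove the left-hand inequality.

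For that, the central step is a purely combinatorial identity expressing a Chern monomial as a nonnegative combination of Schur polynomials with $c_k(E)$ split off exactly once. Passing to Chern roots $a_1,\dots,a_r$, so that $c_i(E)=e_i(a)$ is the $i$-th elementary symmetric polynomial, the monomial becomes $c_{\lambda}(E)=\prod_{j} e_{\lambda_j}(a)$. Using $e_{a}=P_{(1^{a})}$ together with the fact that a product of Schur polynomials expands in the Schur basis with nonnegative integer (Littlewood--Richardson) coefficients, I would write
\[
c_{\lambda}(E)=\sum_{\nu\,\vdash\,k,\ \ell(\nu)\leq r} m_{\nu}^{\lambda}\,P_{\nu}(E),\qquad m_{\nu}^{\lambda}\in\mathbb{Z}_{\geq 0},
\]
the length restriction $\ell(\nu)\leq r$ coming from $P_{\nu}(E)=0$ once $\nu$ has more than $r=\operatorname{rank}E$ rows. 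The key point is that the coefficient of $P_{(1^k)}=c_k$ equals exactly $1$: iterating the (dual) Pieri rule to build the single column $(1^k)$ by successively adjoining vertical strips of sizes $\lambda_1,\lambda_2,\dots$, each strip can only be appended to the bottom of the existing column without creating a second column, so there is exactly one admissible sequence. Hence $c_{\lambda}(E)-c_k(E)=\sum_{\nu\neq(1^k)} m_{\nu}^{\lambda} P_{\nu}(E)$ is a nonnegative combination of Schur polynomials.

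Integrating this identity against $[\omega]^{n-k}$ and applying the \cite{DPS} nonnegativity termwise yields $\int_M c_{\lambda}(E)\wedge[\omega]^{n-k}\geq \int_M c_k(E)\wedge[\omega]^{n-k}$, which is the left-hand inequality; combined with the previous paragraph this proves \eqref{main inequality}. The ``in particular'' statement then follows by specializing to $k=n$, where $[\omega]^{0}=1$ and every Chern number of $E$ is of the form $\int_M c_{\lambda}(E)$ for some $\lambda\vdash n$ with parts $\leq r$, hence is bounded below by $\int_M c_n(E)\geq 0$.

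I expect the main obstacle to be the combinatorial identity, and specifically the verification that $c_k(E)$ appears with coefficient exactly $1$ rather than merely with some nonnegative coefficient: a nonnegative coefficient alone would only produce the weaker bound by a multiple of $\int_M c_k(E)\wedge[\omega]^{n-k}$, whereas the sharp comparison with $c_k$ --- and in particular the clean lower bound by the Euler number --- requires this coefficient to be precisely $1$. A secondary technical point is to keep the rank constraint $\ell(\nu)\leq r$ consistent throughout, so that the Schur polynomials entering the expansion are exactly those to which the \cite{DPS} positivity applies.
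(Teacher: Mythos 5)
Your proof is correct and rests on the same analytic input as the paper's --- the Demailly--Peternell--Schneider positivity $\int_M S_\nu\wedge[\omega]^{n-k}\ge 0$ for nef bundles --- and on the same reduction, namely showing that $c_\lambda(E)-c_k(E)$ is a nonnegative integral combination of Schur polynomials. The difference is in how that combinatorial fact is established. The paper telescopes: it writes $c_ac_b-c_{a+b}=\sum_{i=0}^{b-1}S_{(a+i,b-i,0,\ldots,0)}$ via the explicit two-row identity (\ref{specialschur2}), sums these differences across the product $c_{\lambda_1}\cdots c_{\lambda_k}$, and then needs the Littlewood--Richardson rule only in the weak form of Lemma \ref{positivity of Schur} (the cone is closed under products). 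You instead expand the whole monomial $\prod_j e_{\lambda_j}$ in the Schur basis in one step by the iterated dual Pieri rule and pin down the coefficient of $s_{(1^k)}$ as the Kostka number $K_{(k),\lambda}=1$; your multiplicity-one argument (every intermediate shape must be contained in a single column, and a vertical strip attaches to a column in exactly one way) is sound, and you are right that this exact value of the coefficient, not mere nonnegativity, is what yields the clean comparison with $c_k$ and hence the Euler-number lower bound. Your version is slightly slicker and makes the constant transparent; the paper's avoids any Kostka-number bookkeeping and uses only the two explicit Schur polynomials (\ref{specialschur1})--(\ref{specialschur2}) plus cone multiplicativity. One cosmetic caveat: your Schur polynomials are indexed by the conjugates of the paper's partitions (you write $c_k=P_{(1^k)}$ where the paper writes $c_k=S_{(k,0,\ldots,0)}$); under either convention the indexing set is exactly the Fulton--Lazarsfeld cone, and your length restriction $\ell(\nu)\le r$ is the correct one, so the termwise application of the positivity from \cite{DPS} is legitimate.
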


\section{Applications and examples}\label{applications and examples}
\subsection{Some applications to Theorem \ref{Euler number inequality}}
Mori (\cite{Mori79}) and Siu-Yau's (\cite{SiuYau}) independent solution to the Frankel conjecture asserts that an $n$-dimensional compact K\"{a}hler manifold with positive holomorphic bisectional curvature is biholomorphic to $\mathbb{P}^n$. Building on a splitting result of Howard-Smyth-Wu (\cite{HSW}, \cite{Wu}) and combining analytic and algebraic tools, Mok solved the generalized Frankel conjecture by showing the following uniformization theorem.
If a compact K\"{a}hler manifold has nonnegative holomorphic bisectional curvature, then its universal cover is holomorphically isometric to
\be\label{uniformization of Mok}
(\mathbb{C}^q,g_0)\times(\mathbb{P}^{N_1},\theta_1)\times
\cdots\times(\mathbb{P}^{N_k},\theta_k)\times(M_1,g_1)\times\cdots
\times(M_p,g_p),\ee
where $g_0$ is flat, $\theta_i$ are K\"{a}hler metrics on $\mathbb{P}^{N_i}$ carrying nonnegative holomorphic bisectional curvature, and $(M_i,g_i)$ are irreducible compact Hermitian symmetric spaces of rank $\geq2$ equipped with the canonical metrics.

As a major application of Theorem \ref{Euler number inequality}, we can, with the help of Mok's uniformization theorem, \emph{classify} compact K\"{a}hler manifolds with nonnegative holomorphic bisectional curvature whose Chern numbers are \emph{all positive}.
\begin{theorem}\label{main application}
Let $M$ be an $n$-dimensional compact K\"{a}hler manifold with nonnegative holomorphic bisectional curvature. Then
\begin{enumerate}
\item
either all the Chern numbers of $M$ are positive, in which case $M$ is
holomorphically isometric to
\be\label{simply connected}(\mathbb{P}^{N_1},\theta_1)\times
\cdots\times(\mathbb{P}^{N_k},\theta_k)\times(M_1,g_1)\times\cdots
\times(M_p,g_p)\ee
with $\theta_i$ and $(M_i,g_i)$ are the same as those in (\ref{uniformization of Mok});
\item
or all the Chern numbers of $M$ vanish, in which case $\pi_1(M)$ is infinite and its universal cover splits off a nontrivial complex Euclidean factor $(\mathbb{C}^q,g_0)$ in (\ref{uniformization of Mok}).
\end{enumerate}
\end{theorem}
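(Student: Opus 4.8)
The plan is to realize $TM$ as a nef vector bundle and then trap every Chern number between two matching bounds. Since $M$ carries a K\"{a}hler metric of nonnegative holomorphic bisectional curvature, its tangent bundle $TM$ is Griffiths-nonnegative and hence nef. Applying Theorem \ref{Euler number inequality} to $E=TM$ with $k=n$ (so that the factor $[\omega]^{n-k}$ disappears) gives, for every partition $\lambda$ of $n$,
\[ \int_M c_\lambda(M)\ \geq\ \int_M c_n(M)\ =\ \chi(M)\ \geq\ 0, \]
so the Euler number is a lower bound for every Chern number. The Demailly--Peternell--Schneider estimate (\cite[Coro. 2.6]{DPS}) supplies the matching upper bound $\int_M c_\lambda(M)\leq c_1(M)^n$ for nef $TM$. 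The whole statement will then follow by deciding, through Mok's uniformization (\ref{uniformization of Mok}), when these two bounds coincide.

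First I would use (\ref{uniformization of Mok}) to write the universal cover as $\widetilde{M}=(\mathbb{C}^q,g_0)\times Z$, where $Z=(\mathbb{P}^{N_1},\theta_1)\times\cdots\times(M_p,g_p)$ is compact and simply connected. Everything is governed by the flat dimension $q$. Assume $q\geq 1$. The decomposition (\ref{uniformization of Mok}) is parallel and the Euclidean factor is flat, so the Ricci form of $\widetilde{M}$, and therefore of $M$, is pulled back from $Z$; hence it has rank at most $2\dim_{\mathbb{C}}Z=2(n-q)<2n$ at every point, and its top self-intersection vanishes, i.e.\ $c_1(M)^n=0$. Squeezing with the two bounds above yields $0\leq\int_M c_\lambda(M)\leq c_1(M)^n=0$, so every Chern number of $M$ vanishes. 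Compactness of $M$ forces the image of $\pi_1(M)$ in $\mathrm{Isom}(\mathbb{C}^q)$ to be cocompact, hence to contain a translation lattice $\cong\mathbb{Z}^{2q}$; in particular $\pi_1(M)$ is infinite and $\widetilde{M}$ splits off the nontrivial factor $(\mathbb{C}^q,g_0)$, giving alternative (2).

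Now assume $q=0$, so $\widetilde{M}=Z$ is compact and $\pi_1(M)$ is finite. Each $\mathbb{P}^{N_i}$ and each compact Hermitian symmetric $M_j$ has cohomology concentrated in even degrees and hence positive Euler characteristic, so $\chi(\widetilde{M})>0$ and $\chi(M)=\chi(\widetilde{M})/|\pi_1(M)|>0$. The lower bound now upgrades to $\int_M c_\lambda(M)\geq\chi(M)>0$ for every $\lambda$, so all Chern numbers of $M$ are positive. To identify $M$ with the product itself I would exploit that $Z$ is Fano: each factor has ample anticanonical bundle, so $-K_Z$ is ample, and since $\widetilde{M}\to M$ is \'{e}tale, ampleness descends and $-K_M$ is ample, i.e.\ $M$ is Fano. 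Kodaira vanishing then gives $\chi(\mathcal{O}_M)=1$, while $\chi(\mathcal{O})$ multiplies by the degree $|\pi_1(M)|$ under the \'{e}tale cover $\widetilde{M}\to M$; comparing, $1=|\pi_1(M)|\cdot 1$, so $\pi_1(M)=1$ and $M=Z$ holomorphically and isometrically, which is alternative (1).

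The step I expect to be the main obstacle is this last upgrade in the case $q=0$: the inequality of Theorem \ref{Euler number inequality} only shows that $M$ is finitely covered by the product (\ref{simply connected}), and promoting this to an honest isometry requires killing a possibly nontrivial finite $\pi_1(M)$. The Fano argument via $\chi(\mathcal{O}_M)=1$ does so cleanly, but it is precisely the place where algebro-geometric input, rather than the Chern number bounds, is indispensable. A secondary point to pin down is the claim that the Ricci form of $M$ is pulled back from $Z$ when $q\geq1$; this is immediate from the parallelism of the de Rham splitting underlying Mok's theorem, the Euclidean factor being Ricci-flat, but it should be phrased carefully so that the vanishing $c_1(M)^n=0$ is unambiguous.
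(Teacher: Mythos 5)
Your proof is correct, and its core --- realizing $TM$ as a nef bundle and squeezing every Chern number between the Euler number (Theorem \ref{Euler number inequality}) and $c_1^n$ (the Demailly--Peternell--Schneider upper bound), then using the uniformization theorem to decide when the bounds meet --- is exactly the paper's argument. The only genuine difference is the treatment of the fundamental group. The paper dichotomizes on $\pi_1(M)$ trivial versus nontrivial and cites the Howard--Smyth--Wu splitting theorem, which already asserts that a nontrivial $\pi_1(M)$ must be \emph{infinite} and that the Albanese map is an isometrically trivial fiber bundle over a flat torus; the rank-deficiency of the Ricci form then gives $c_1^n[M]=0$ just as in your $q\geq 1$ case. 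You instead dichotomize on the flat dimension $q$ of the universal cover, which leaves open the intermediate possibility of a finite nontrivial $\pi_1(M)$ when $q=0$, and you kill it with the standard Fano argument (ampleness of $-K_M$ by descent along the finite \'{e}tale cover, Kodaira vanishing, multiplicativity of $\chi(\mathcal{O})$). That is legitimate, and it is essentially the alternative route the authors themselves sketch in the remark following their proof ($c_1^n[M]>0$ forces $M$ Fano, hence simply connected). Two small simplifications are available to you: when $q\geq 1$ you do not need a Bieberbach-type argument to see that $\pi_1(M)$ is infinite, since a compact manifold with finite fundamental group has compact universal cover while $\mathbb{C}^q\times Z$ is noncompact; and the pointwise vanishing of $\rho^n$ follows at once from $\rho$ being pulled back from the $(n-q)$-dimensional factor $Z$, so that $\rho^{n-q+1}=0$, with no further rank count needed.
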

Combining the results in \cite{DPS} and \cite{Yang17} with Theorem \ref{main application}, we have now the following equivalent conditions to characterize simply-connected compact K\"{a}hler manifolds with nonnegative holomorphic bisectional curvature.
\begin{theorem}
Let $M$ be an $n$-dimensional compact K\"{a}hler manifold with nonnegative holomorphic bisectional curvature. Then the following four statements are equivalent.
\begin{enumerate}
\item
$M$ is Fano, i.e., $c_1(M)>0$;

\item
the holomorphic tangent bundle $T_M$ is big;

\item
the Chern number $c_1^n>0$;

\item
all the Chern numbers of $M$ are positive.
\end{enumerate}
\end{theorem}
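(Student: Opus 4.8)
The plan is to establish a cycle of implications among the four statements, assembling them from Yang's bigness criterion \cite[Thm 1.2]{Yang17}, the structure theory of \cite{DPS}, and the dichotomy supplied by Theorem \ref{main application}. Throughout, the hypothesis of nonnegative holomorphic bisectional curvature makes $T_M$ Griffiths-nonnegative and therefore nef, so all three of these sources apply to $M$.

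The two directions touching $(3)$ that cost nothing I would dispatch first. Since $c_1^n$ is itself one of the Chern numbers, $(4)\Rightarrow(3)$ is immediate. For $(1)\Rightarrow(3)$, the Fano condition $c_1(M)>0$ says that $-K_M$ is ample, whence its top self-intersection number $c_1^n=\int_M c_1^n$ is strictly positive.

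The heart of the argument is to feed $(3)$ into Theorem \ref{main application}. That theorem asserts that, under our curvature hypothesis, either \emph{all} Chern numbers of $M$ are positive or they \emph{all} vanish. The assumption $c_1^n>0$ excludes the latter case, so $(3)\Rightarrow(4)$ follows at once. Moreover, in the surviving case Theorem \ref{main application} identifies $M$, up to holomorphic isometry, with a product (\ref{simply connected}) of projective spaces $\mathbb{P}^{N_i}$ and irreducible compact Hermitian symmetric spaces $M_i$ of rank $\geq 2$. Every factor here has positive first Chern class, since projective spaces and compact Hermitian symmetric spaces are Fano, and a finite product of Fano manifolds is Fano; hence $c_1(M)>0$, giving $(3)\Rightarrow(1)$.

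Finally I would invoke Yang's theorem, which states precisely that for a compact K\"{a}hler manifold with nonnegative holomorphic bisectional curvature, $T_M$ is big if and only if $M$ is Fano; this is the equivalence $(1)\Leftrightarrow(2)$. Together the links $(1)\Leftrightarrow(2)$, $(1)\Leftrightarrow(3)$, and $(3)\Leftrightarrow(4)$ force all four statements to be equivalent. I expect the only point demanding attention to be the verification that each factor of the decomposition (\ref{simply connected}) is Fano, which must be checked factor by factor and relies on the explicit form of Mok's uniformization already encoded in Theorem \ref{main application}; the substantive analytic content having been absorbed into that theorem and into \cite{Yang17}, no further obstacle should arise.
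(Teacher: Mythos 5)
Your proposal is correct, and its skeleton coincides with the paper's: the paper disposes of this theorem in a one-line remark, citing Yang (\cite[Thm 1.2]{Yang17}) for the bigness equivalence, Theorem \ref{main application} for $(3)\Leftrightarrow(4)$, and Demailly--Peternell--Schneider (\cite[\S 4]{DPS}) for $(1)\Leftrightarrow(3)$. The one place you genuinely diverge is that last link: instead of quoting the DPS result (which holds for the much larger class of compact K\"ahler manifolds with merely nef tangent bundle), you extract $(3)\Rightarrow(1)$ from the explicit product decomposition (\ref{simply connected}) that Theorem \ref{main application} provides in the positive case, observing that projective spaces and irreducible compact Hermitian symmetric spaces are Fano and that Fano-ness passes to finite products. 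This is sound and makes the argument self-contained modulo Theorem \ref{main application}, at the cost of leaning on the full strength of Mok's uniformization where the paper only needs the softer nef-tangent-bundle machinery of \cite{DPS}; conversely, the paper's citation route is shorter but imports an external result proved in greater generality than is needed here. Your attribution of Yang's theorem as the equivalence $(1)\Leftrightarrow(2)$ (rather than $(2)\Leftrightarrow(3)$ as in the paper's remark) matches the statement of \cite[Thm 1.2]{Yang17} as quoted in the paper's introduction, and in the presence of $(1)\Leftrightarrow(3)$ the two formulations are interchangeable.
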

\begin{remark}
``$(1)\Longleftrightarrow(3)$" is due to Demailly-Peternell-Schneider (\cite[\S 4]{DPS}), where they indeed showed this for compact K\"{a}hler manifolds with nef tangent bundles. ``$(2)\Longleftrightarrow(3)$" is due to Yang (\cite[Thm 1.2]{Yang17}). ``$(4)\Longleftrightarrow(3)$" follows from our Theorem \ref{main application}.
\end{remark}

Recall that, for a (possibly non-K\"{a}hler) Hermitian metric $g$ on a complex manifold $M$, the holomorphic bisectional curvature of $g$ can still be defined in terms of the Chern connection. Denote by $TM$ the holomorphic tangent bundle of $M$. Then the Hermitian vector bundle $(TM, g)$ \big(resp. ($T^{\ast}M$,g)\big) is Griffiths-nonnegative if and only if the holomorphic bisectional curvature of $g$  is nonnegative (resp. nonpositive) (\cite[p. 177]{Zheng00}). Since Griffiths-nonnegativity implies nefness, combining the lower bound in Theorem \ref{Euler number inequality} with the upper bound in \cite{DPS} we have the following
\begin{corollary}\label{coro1}
Let $M$ be an $n$-dimensional compact K\"{a}hler manifold with a (possibly different) Hermitian metric $g$ whose holomorphic bisectional curvature is nonnegative (resp. nonpositive). Then the Chern numbers $c_{\lambda}[M]$ of $M$ satisfy
$$0\leq c_n[M]\leq c_{\lambda}[M]\leq c_1^n[M]$$
$$\big(\text{resp.
$0\leq(-1)^nc_n[M]\leq(-1)^nc_{\lambda}[M]\leq(-1)^nc_1^n[M]$}\big).$$
\end{corollary}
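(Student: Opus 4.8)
The plan is to reduce the corollary to Theorem \ref{Euler number inequality} together with the Demailly--Peternell--Schneider upper bound, both applied to the (co)tangent bundle of $M$. First I would invoke the curvature characterization recalled immediately above the statement: if $g$ has nonnegative holomorphic bisectional curvature then $(TM,g)$ is Griffiths-nonnegative, whereas if $g$ has nonpositive holomorphic bisectional curvature then $(T^{\ast}M,g)$ is Griffiths-nonnegative. Since Griffiths-nonnegativity implies nefness, in the first case $E=TM$ is a nef vector bundle of rank $n$, and in the second case $E=T^{\ast}M$ is. Note that the metric $g$ enters only here, to establish nefness of the appropriate bundle; the integrals defining the Chern numbers are topological, and the standing Kähler hypothesis on $M$ is what permits Theorem \ref{Euler number inequality} to be applied in the first place.

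For the nonnegative case I would apply Theorem \ref{Euler number inequality} to $E=TM$ with $k=n$, taking $\omega$ to be any Kähler form coming from the given Kähler structure on $M$. Because $[\omega]^{n-n}=1$, the inequality (\ref{main inequality}) collapses to $c_{\lambda}[M]\geq c_n[M]\geq 0$ for every $\lambda\in\Gamma(n,n)$, which supplies the two lower bounds. The upper bound $c_{\lambda}[M]\leq c_1^n[M]$ is precisely the Demailly--Peternell--Schneider bound for Chern numbers of nef bundles on Kähler manifolds quoted in the introduction. Chaining these gives $0\leq c_n[M]\leq c_{\lambda}[M]\leq c_1^n[M]$.

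For the nonpositive case I would run the identical argument with $E=T^{\ast}M$ and then transfer back to $TM$ via the identity $c_i(T^{\ast}M)=(-1)^ic_i(TM)$. Since any $\lambda\in\Gamma(n,n)$ has weight $k=n$, this yields $c_{\lambda}(T^{\ast}M)=(-1)^nc_{\lambda}(TM)$ and $c_1(T^{\ast}M)^n=(-1)^nc_1^n(TM)$. Feeding the lower bounds of Theorem \ref{Euler number inequality} and the Demailly--Peternell--Schneider upper bound for $T^{\ast}M$ through these substitutions produces exactly $0\leq(-1)^nc_n[M]\leq(-1)^nc_{\lambda}[M]\leq(-1)^nc_1^n[M]$.

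I do not anticipate a genuine obstacle, as the corollary is essentially a repackaging of the two main bounds; the only point demanding care is the bookkeeping between the auxiliary metric $g$ and the Kähler form $\omega$. Because we are forced to take $k=n$ (these are top Chern numbers), the auxiliary class $[\omega]^{n-k}$ degenerates to $1$, so no dependence on the choice of Kähler form survives and the conclusion is a clean statement about the Chern numbers of $M$ alone.
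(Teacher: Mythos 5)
Your proposal is correct and is essentially the paper's own argument: the paper derives this corollary exactly by noting that the curvature hypothesis makes $TM$ (resp. $T^{\ast}M$) Griffiths-nonnegative and hence nef, then combining the lower bound of Theorem \ref{Euler number inequality} (with $k=n$) with the Demailly--Peternell--Schneider upper bound $c_{\lambda}\leq c_1^n$. The sign bookkeeping via $c_i(T^{\ast}M)=(-1)^ic_i(TM)$ in the nonpositive case is exactly what is implicitly used in the paper.
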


The famous Hopf conjecture asserts that the Euler number $\chi(M)$ of a closed $2n$-dimensional Riemannian manifold $M$ with sectional curvature $K<0$ (resp. $K\leq0$) satisfies $(-1)^n\chi(M)>0$ \big(resp. $(-1)^n\chi(M)\geq0$\big), which is true when $n\leq2$ (\cite{Ch}) but is still open in its full generality for $n\geq 3$. Gromov (\cite{Gr}) introduced the notion of ``K\"{a}hler hyperbolicity", which includes K\"{a}hler metrics with $K<0$ as special cases, and showed that the Euler number of K\"{a}hler hyperbolic manifolds have the expected sign. This notion was extended independently by Cao-Xavier (\cite{CX}) and Jost-Zuo (\cite{JZ}) to the nonpositive case. These consequently settled the above Hopf Conjecture for K\"{a}hler manifolds. Indeed what they achieved is a solution of a stronger conjecture, the Singer conjecture in the K\"{a}hlerian case (\cite[\S 11]{Lu}).

Note that the sign of holomorphic bisectional curvature of a K\"{a}hler metric is dominated by that of (Riemannian) sectional curvature (\cite[p. 178]{Zheng00}). So our following corollary provides more information on Chern numbers of compact K\"{a}hler manifolds with nonpositive sectional curvature.
\begin{corollary}
Let $(M,\omega)$ be an $n$-dimensional compact K\"{a}hler manifold with nonpositive (Riemannian) sectional curvature. Then its Chern numbers $c_{\lambda}[M]$ satisfy
 $$0\leq(-1)^n\chi(M)=(-1)^nc_n[M]\leq(-1)^nc_{\lambda}[M]\leq(-1)^nc_1^n[M].$$
\end{corollary}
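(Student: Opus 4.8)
The plan is to derive this corollary as an essentially immediate consequence of Corollary \ref{coro1} (in its nonpositive form) together with two classical facts: the comparison between Riemannian sectional curvature and holomorphic bisectional curvature, and the Gauss-Bonnet-Chern identification of the Euler number with the top Chern number. All the deep positivity input is already packaged into Theorem \ref{Euler number inequality} and the upper bound of \cite{DPS} through Corollary \ref{coro1}, so the only remaining work is to verify that the hypothesis of nonpositive sectional curvature feeds correctly into those results.

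First I would record the curvature comparison. For a K\"{a}hler metric the holomorphic bisectional curvature in the real directions spanned by unit vectors $X$ and $Y$ is $R(X,JX,JY,Y)$, and a standard polarization identity (the one quoted from \cite[p. 178]{Zheng00}) expresses this quantity as a nonnegative combination of the Riemannian sectional curvatures of the real $2$-planes spanned by $X,Y$ and by $X,JY$. Consequently, if the Riemannian sectional curvature of $(M,\omega)$ is nonpositive, then so is its holomorphic bisectional curvature. In particular $(M,\omega)$ is itself a compact K\"{a}hler (hence Hermitian) manifold whose metric has nonpositive holomorphic bisectional curvature, so Corollary \ref{coro1} applies verbatim with $g=\omega$ and yields
$$0\leq(-1)^nc_n[M]\leq(-1)^nc_{\lambda}[M]\leq(-1)^nc_1^n[M]$$
for every partition $\lambda$.

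It then remains only to replace $c_n[M]$ by the topological Euler number $\chi(M)$. This is exactly the Gauss-Bonnet-Chern theorem: for any compact complex manifold the top Chern number $c_n[M]$ coincides with the Euler characteristic $\chi(M)$. Substituting this identity into the leftmost term of the chain above gives the asserted string of inequalities, whose first member $(-1)^n\chi(M)\geq0$ is precisely the expected Hopf-type sign.

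I do not anticipate a genuine obstacle here, since the statement is a repackaging of earlier results. The only point demanding a little care is the \emph{direction} of the curvature comparison, namely ensuring that nonpositivity of sectional curvature \emph{forces} (rather than merely is implied by) nonpositivity of holomorphic bisectional curvature; this is guaranteed by the nonnegative-coefficient form of the polarization identity and is the content of the cited remark immediately preceding the statement.
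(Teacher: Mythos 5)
Your proposal is correct and follows exactly the route the paper intends: the remark preceding the corollary supplies the curvature comparison (nonpositive sectional curvature forces nonpositive holomorphic bisectional curvature), after which Corollary \ref{coro1} applies with $g=\omega$ and Gauss--Bonnet--Chern identifies $c_n[M]$ with $\chi(M)$. No discrepancy with the paper's (implicit) argument.
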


\begin{remark}
By refining Gromov's idea, the first author recently deduced that (\cite[Thm 2.1]{Li19}) $n$-dimensional K\"{a}hler hyperbolic manifolds indeed satisfy a family of optimal Chern number inequalities and the first one is exactly $(-1)^nc_n\geq n+1$ , which is an improved inequality expected by the Hopf conjecture.
\end{remark}

\subsection{Examples}
We give in this subsection some examples to illustrate some main results in Section \ref{main results}.

The following tensor product formulas for Segre classes and total Chern class are well-known (cf. \cite[p. 49-50, p. 56]{Ful2}) and shall be used in the sequel.
\begin{example}\label{tensor formula}
Let $E$ be a vector bundle of rank $n+1$ (resp. $n$) and $L$ a line bundle. Then we have
\be s_k(E\otimes L)=\sum_{i=0}^k\binom{n+k}{k-i}\cdot s_i(E)\cdot L^{k-i},\qquad \forall~k.\nonumber\ee
\be\Big(\text{resp. $c(E\otimes L)=\sum_{i=0}^nc_i(E)\cdot(1+L)^{n-i}$}.\Big)\nonumber\ee
Note that our Segre classes $s_k$ defined in (\ref{segre class}) are different from those in \cite{Ful2} by a sign $(-1)^k$ (cf. \cite[p. 50]{Ful2}).
\end{example}

\begin{example}\label{example1}
Let $L:=\mathcal{O}_{\mathbb{P}^n}(1)$ be the hyperplane line bundle on $\mathbb{P}^n$. We show that the equality cases of (\ref{Chern class inequality}) are satisfied by $(\mathbb{P}^n,L)$. Indeed we have the relation $T{\mathbb{P}^n}\oplus\underline{\mathbb{C}}=(n+1)L$, where $\underline{\mathbb{C}}$ denotes the trivial line bundle, and thus
\be\label{example}\underline{\mathbb{C}}^{n+1}
=(T^{\ast}\mathbb{P}^n\oplus\underline{\mathbb{C}})\otimes L.\ee
Taking the $k$-th Segre class $s_k(\cdot)$ on both sides of (\ref{example}) leads to
\be\label{example2}\begin{split}
0&=s_k\big((T^{\ast}\mathbb{P}^n\oplus\underline{\mathbb{C}})\otimes L\big)\\
&=\sum_{i=0}^{k}\binom{n+k}{k-i}\cdot s_i(T^{\ast}\mathbb{P}^n\oplus\underline{\mathbb{C}})\cdot L^{k-i}\qquad(\text{by Example \ref{tensor formula}})\\
&=\sum_{i=0}^{k}(-1)^i\binom{n+k}{k-i}\cdot s_i(\mathbb{P}^n)\cdot L^{k-i}.
\end{split}
\ee
\end{example}

\begin{example}\label{example c1}
\begin{enumerate}
\item
If $c_1=0$, (\ref{c1=0}) says that $\frac{1}{2}(n+2)(n+1)L^2\geq c_2$ for any very ample line bundle $L$, and equality occurs when and only when the Kodaira map (\ref{linear system}) for $L$ embeds $M^n$ as a degree $n+2$ hypersurface in ${\mathbb P}^{n+1}$.
\item
For a projective manifold $M^n$ with $c_1<0$, one may take a very ample line bundle $L=-ac_1$ with $a\in {\mathbb Q}^+$, and get from (\ref{class inequality-1}) that
$$\big[\frac{1}{2}(n+2)(n+1)a^2+(n+2)a+1\big]c_1^2\geq c_2,$$
with equality if and only if the Kodaira map for $L$ embeds $M^n$ as a hyperfurface in ${\mathbb P}^{n+1}$ of degree $L^n=a^n(-c_1)^n\geq n+3$. In this case we have $c_1=(n+2-L^n)L$ and so $a=1/[L^n-(n+2)].$

\item
Similarly, for a Fano manifold $M^n$, one may take a very ample line bundle $L=ac_1$ with $a\in {\mathbb Q}^+$ and get from (\ref{class inequality-1}) the inequality
$$\big[\frac{1}{2}(n+2)(n+1)a^2-(n+2)a+1\big]c_1^2\geq c_2,$$
where the equality case occurs if and only if either $L^n=1$ and $(M,L)=\big(\mathbb{P}^n,\mathcal{O}_{\mathbb{P}^n}(1)\big)$, or the Kodaira map embeds $M^n$ as a hyperfurface in ${\mathbb P}^{n+1}$ with degree $L^n\leq n+1$. In the latter case we have $a=1/[(n+2)-L^n].$
\end{enumerate}
\end{example}

\begin{example}\label{calculate example}
In this example we indicate how to derive (\ref{corollary inequality}) from (\ref{c1c2new}). Indeed, direct calculations imply that
\be\label{case}\begin{split}
&\frac{n(n+1)}{2}\big[-c_1+(n+2)L\big]^2-nc_1\big[-c_1+(n+2)L\big]+c_2\\
=\ \ &\frac{n(n+2)^2}{2(n+1)}\big[-c_1+(n+1)L\big]^2-\frac{n}{2(n+1)}c_1^2+c_2.
\end{split}\ee
Replacing $L$ in (\ref{c1c2new}) with $-c_1+(n+2)L$ and using (\ref{case}) it yields
\be\label{case2}
\begin{split}
&\Big\{\frac{n(n+2)^2}{2(n+1)}\big[-c_1+(n+1)L\big]^2-\frac{n}{2(n+1)}c_1^2+c_2\Big\}
\big[-c_1+(n+1)L\big]^{n-2}\\
\leq \ \ &(n+2)^2\big[-c_1+(n+1)L\big]^n.
\end{split}
\ee
Multiplying by $2(n+1)$ on both sides of (\ref{case2}) and cancelling the terms involving $[-c_1+(n+1)L]^n$ yields (\ref{corollary inequality}). Further replacing $L$ with $-c_1$ in (\ref{corollary inequality}) leads to (\ref{yau opposite}).
\end{example}

\section{A conjecture in the nonpositive case}\label{a conjecture}
In view of Theorem \ref{main application}, it is natural to wonder, for compact K\"{a}ler manifolds with nonpositive holomorphic bisectional curvature, whether their signed Chern numbers have the similar phenomenon of simultaneous positivity like Theorem \ref{main application}. In contrast to Mok's uniformization theorem in the nonnegative situation, our current knowledge on the structure of nonpositive holomorphic bisectional curvature compact K\"{a}hler manifolds is still much less satisfactory. So no appropriate structure theorem is available to deduce a similar conclusion, to our best knowledge. Nevertheless, we believe the validity of the following conjecture, which can be regarded as the complex analogue to the Hopf conjecture.
\begin{conjecture}\label{question}
Let $(M,\omega)$ be an $n$-dimensional compact K\"{a}hler manifold with nonpositive holomorphic bisectional curvature whose Ricci curvature is quasi-negative. Then the signed Euler number $(-1)^nc_n[M]>0$.
\end{conjecture}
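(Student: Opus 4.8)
The plan is to reduce the conjecture to a pure strict\nobreakdash-positivity statement and then attack the strictness with the two tools furnished by this paper. By \cite[p.~177]{Zheng00}, nonpositivity of the holomorphic bisectional curvature of $\omega$ is equivalent to Griffiths\nobreakdash-nonnegativity of $(T^{\ast}M,\omega)$, so $T^{\ast}M$ is nef. Since $c_n(T^{\ast}M)=(-1)^nc_n(TM)$, we have $\int_Mc_n(T^{\ast}M)=(-1)^nc_n[M]=(-1)^n\chi(M)$, and applying Theorem \ref{Euler number inequality} to the nef bundle $E=T^{\ast}M$ with $k=n$ gives at once the \emph{weak} inequality $(-1)^n\chi(M)=\int_Mc_n(T^{\ast}M)\geq0$. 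This is exactly the nonpositive half of Corollary \ref{coro1}. Thus the entire content of the conjecture is the upgrade from $\geq0$ to $>0$, and this is where the quasi\nobreakdash-negativity of the Ricci curvature must enter.

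First I would draw out the consequences of quasi\nobreakdash-negative Ricci. The form $-\mathrm{Ric}(\omega)$ is a semipositive $(1,1)$\nobreakdash-form representing a positive multiple of $c_1(K_M)$, and it is strictly positive on the nonempty open set where $\mathrm{Ric}$ is negative definite; hence its top power is a nonnegative volume form that is positive on that set, so $\int_Mc_1(K_M)^n>0$ and $K_M$ is nef and big. Therefore $M$ is a projective manifold of general type. Moreover nonpositive bisectional curvature forces nonpositive holomorphic sectional curvature, and together with the quasi\nobreakdash-negativity one expects, via a Wu--Yau / Diverio--Trapani / Tosatti--Yang type argument, that $K_M$ is in fact \emph{ample}. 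The quasi\nobreakdash-negativity also removes the obvious obstruction to strictness: negative definiteness of $\mathrm{Ric}$ at a point $p$ excludes any direction or factor through $p$ along which the curvature vanishes identically, which is precisely the kind of ``flat piece'' responsible for the vanishing of the Euler number in the nonnegative analogue of Theorem \ref{main application}.

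For the strictness itself I see two routes. The \emph{analytic} route follows Gromov and its nonpositive extensions by Cao--Xavier \cite{CX} and Jost--Zuo \cite{JZ}: if the Kähler form lifts to a $d$\nobreakdash-bounded form on the universal cover, then $L^2$\nobreakdash-Hodge theory concentrates the $L^2$\nobreakdash-cohomology in the middle degree, so that $(-1)^n\chi(M)$ equals the middle $L^2$\nobreakdash-Betti number, which one then shows is strictly positive. The \emph{algebro\nobreakdash-geometric} route instead seeks to characterize the equality case $\int_Mc_n(T^{\ast}M)=0$ of Theorem \ref{Euler number inequality} for the nef bundle $T^{\ast}M$ and to prove that quasi\nobreakdash-negative Ricci is incompatible with it, in direct parallel to how Theorem \ref{main application} extracts positivity of all Chern numbers from the absence of a flat factor.

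The hard part, and the reason this stays a conjecture, is that both routes stumble on the same gap: a compact Kähler manifold with nonpositive \emph{holomorphic bisectional} curvature need not have a Cartan--Hadamard universal cover, in contrast to the nonpositive \emph{Riemannian sectional} curvature setting of \cite{CX,JZ}, so $d$\nobreakdash-boundedness of the lifted Kähler form is not automatic; and there is as yet no uniformization or structure theorem on the nonpositive side playing the role Mok's theorem plays in Theorem \ref{main application}. The most promising concrete attack therefore seems to be to establish a Kähler\nobreakdash-hyperbolicity\nobreakdash-type statement \emph{directly} from the nefness and Griffiths\nobreakdash-positivity of $T^{\ast}M$ together with the bigness (or ampleness) of $K_M$, thereby bypassing the missing structure theory. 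Proving this, rather than any of the surrounding cohomological bookkeeping, is where the genuine difficulty lies.
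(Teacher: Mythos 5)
This statement is labeled a conjecture, and the paper offers no proof of it: the authors explicitly write that they are unable to solve it. Your proposal, read as a proof, therefore has a genuine gap --- and you correctly identify it yourself. Everything you actually establish (nefness of $T^{\ast}M$ from \cite[p.~177]{Zheng00}, the weak inequality $(-1)^n\chi(M)=\int_M c_n(T^{\ast}M)\geq 0$ via Theorem \ref{Euler number inequality}) is already the content of Corollary \ref{coro1}; the entire assertion of the conjecture is the strict inequality, and neither of your two proposed routes closes it. The analytic route fails exactly where you say: nonpositive holomorphic bisectional curvature does not give a Cartan--Hadamard universal cover, so $d$-boundedness of the lifted K\"ahler form (the hypothesis of Gromov, Cao--Xavier and Jost--Zuo) is not available. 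The algebro-geometric route fails because no analogue of Mok's uniformization theorem exists on the nonpositive side --- the paper makes precisely this point in Section \ref{a conjecture}. So the proposal is an accurate map of the obstruction, not a proof.

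One substantive comparison is worth making. The paper's partial evidence takes a route you do not mention: it invokes the Liu / Wu--Zheng splitting theorem (Theorem \ref{LWZ}) to pass to a finite cover $M'$ fibering over a base $N^r$ with $c_1(N)<0$, observes that quasi-negative Ricci forces $r=n$ and hence $c_1(M')<0$, and then applies Yau's Miyaoka--Yau inequality $(-1)^nc_2c_1^{n-2}[M']\geq \frac{n}{2(n+1)}(-1)^nc_1^n[M']>0$ together with multiplicativity of Chern numbers under finite covers. This yields the strict positivity of the signed Chern number $(-1)^nc_2c_1^{n-2}[M]$ and in particular settles the conjecture for $n=2$, where $c_2=c_n$. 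Your proposal obtains neither of these; if you want to contribute partial evidence rather than a survey of obstacles, the splitting theorem plus Yau's inequality is the concrete mechanism the paper uses, and it is compatible with (indeed sharper than, in low dimensions) the Schur-polynomial positivity you are leaning on. Also, a small caution on your intermediate claim: quasi-negativity of the Ricci curvature gives $K_M$ nef and big directly from $(-1)^nc_1^n[M]>0$, but the appeal to Wu--Yau / Diverio--Trapani / Tosatti--Yang is not needed for that and would require control of the holomorphic sectional curvature beyond mere nonpositivity to conclude ampleness in the form those theorems are stated; in any case ampleness of $K_M$ alone is far from implying $(-1)^n\chi(M)>0$.
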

\begin{remark}
If a K\"{a}hler metric $\omega$ has nonpositive bisectional curvature, then its Ricci curvature is nonpositive. So the quasi-negativity of Ricci curvature is equivalent to $(-1)^nc_1^n[M]>0$. In view of Corollary \ref{coro1}, Conjecture \ref{question} is \emph{equivalent to} the simultaneous positivity and vanishing of Chern numbers for such manifolds. Unfortunately, so far we are unable to solve it.
\end{remark}

A positive evidence to Conjecture \ref{question} indeed has been provided in \cite{Li20}. It turns out that the holomorphic cotangent bundles of (immersed) complex submanifolds in complex tori admit Bott-Chern nonnegative Hermitian metrics (\cite[Ex 4.3]{Li20}). As an application of the main results, it is shown in (\cite[Thm 7.3]{Li20}), among other things, that their signed Chern numbers have the phenomena of simultaneous positivity and vanishing. Note that complex submanifolds in complex tori can be equipped with K\"{a}hler metrics with nonpositive holomorphic bisectional curvature (the induced metrics from the flat complex tori). So this result indeed partially confirms Conjecture \ref{question}.

It is worth mentioning that the following splitting result for nonpositive bisectional curvature compact K\"{a}hler manifolds, which is dual to the famous splitting result of Howard-Smyth-Wu in the nonnegative situation (\cite{HSW}, \cite{Wu}) and originally conjectured by S.-T. Yau, has been recently confirmed by Liu (\cite{Liu14}) building on earlier works of Wu and the second author (\cite{Zheng02}, \cite{WuZheng}).
\begin{theorem}[Liu, Wu-Zheng]\label{LWZ}
If $(M,\omega)$ is an $n$-dimensional compact K\"{a}hler manifold with nonpositive holomorphic bisectional curvature whose maximal rank of the Ricci form is $r$ ($0\leq r\leq n$), then there exists a finite cover $M'$ of $M$ such that $M'$ is holomorphically isometric to a flat torus bundle $T^{n-r}$ over a compact K\"{a}hler manifold $N^{r}$ with nonpositive bisectional curvature and $c_1(N)<0$.
\end{theorem}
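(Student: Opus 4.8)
The plan is to locate the flat directions of $M$ as the null space of its Ricci form and to show that they assemble into a parallel holomorphic foliation whose leaves, after passing to a finite cover, are complex tori. Since the holomorphic bisectional curvature is nonpositive, the Ricci form $\rho$ is nonpositive, being a trace of bisectional curvatures: writing $\mathrm{Ric}(v,\bar v)=\sum_i R(v,\bar v,e_i,\bar e_i)$ in a unitary frame, each summand is $\leq 0$. Hence if $v$ is a null direction, $\mathrm{Ric}(v,\bar v)=0$ forces every bisectional curvature $R(v,\bar v,e_i,\bar e_i)$ to vanish, and together with the K\"ahler symmetries this shows that $v$ lies in the nullity of the full curvature operator. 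First I would fix the open dense set $U\subset M$ on which $\rho$ attains its maximal rank $r$, so that the null space $\mathcal K_x=\ker\rho_x$ is a holomorphic distribution of rank $n-r$ on $U$, tangent to flat directions.

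The heart of the argument, which is due to Wu and the second author, is to prove that $\mathcal K$ is \emph{parallel} with respect to the Levi-Civita (equivalently Chern) connection. This is where nonpositivity of bisectional curvature is used decisively: a second-variation/Bochner-type computation combined with the second Bianchi identity shows that $\nabla_X v$ remains null for every $v\in\mathcal K$ and every $X$, so that $\mathcal K$ and its orthogonal complement are both parallel holomorphic subbundles. A parallel holomorphic distribution with flat, totally geodesic leaves produces a local K\"ahler--de Rham splitting $M_{\mathrm{loc}}\cong F\times N_{\mathrm{loc}}$ in which $F$ is flat of complex dimension $n-r$ and $N_{\mathrm{loc}}$ carries a Ricci-nondegenerate, hence negative definite, K\"ahler metric.

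To globalize, I would pass to the universal cover $\widetilde M$, where the de Rham decomposition splits off the flat factor $\mathbb{C}^{n-r}$ isometrically and holomorphically, the complementary factor being the simply connected model of $N$ with $c_1<0$. The deck group acts by isometries respecting this splitting; analyzing its action on the Euclidean factor and passing to a finite-index subgroup that acts there by translations exhibits a finite cover $M'$ of $M$ as the total space of a holomorphic, locally isometric flat torus bundle $T^{n-r}\to M'\to N^r$ over the quotient $N$ of the non-flat factor. On $N$ the Ricci form has constant maximal rank and is negative definite, so $c_1(N)<0$, while nonpositivity of bisectional curvature descends to $N$.

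The main obstacle is that the rank of $\rho$ equals $r$ only generically: it may drop along a proper analytic subset $M\setminus U$, so that $\mathcal K$ is a priori defined and parallel only on $U$. Extending $\mathcal K$ across this degeneracy locus and upgrading the resulting singular foliation to a genuine smooth holomorphic fibration with compact torus fibers over a smooth base is the delicate point --- precisely the gap that Liu's work closes on top of the earlier partial results of Wu and the second author. Controlling the holonomy of the flat leaves together with the fundamental group of $M$ is what forces the passage to a finite cover rather than to $M$ itself.
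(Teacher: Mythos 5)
The paper does not prove this statement at all: Theorem \ref{LWZ} is quoted as a known result, with the proof delegated entirely to Liu \cite{Liu14} and the earlier work of Wu and the second author \cite{Zheng02}, \cite{WuZheng}. So there is no internal argument to compare yours against; the only question is whether your sketch would stand on its own, and it does not. Your first step is fine: nonpositive bisectional curvature makes $\mathrm{Ric}(v,\bar v)$ a sum of nonpositive terms, and since $X\mapsto R(v,\bar v,X,\bar X)$ is a negative semidefinite Hermitian form with vanishing trace when $v$ is Ricci-null, one does get $R(v,\bar v,\cdot,\bar\cdot)\equiv 0$. But the two decisive steps are asserted rather than proved. (i) The parallelism and holomorphicity of the kernel distribution, and above all its extension across the analytic set where the rank of the Ricci form drops, is not a routine ``Bochner-type computation combined with the second Bianchi identity''; it is precisely the content of the Wu--Zheng and Liu papers, and you acknowledge as much in your final paragraph. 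A proof that defers its crux to the very theorems being cited is an outline, not a proof. (ii) Your claim that on the base $N$ ``the Ricci form has constant maximal rank and is negative definite, so $c_1(N)<0$'' is also a gap: the construction only gives a Ricci form on $N$ that is negative semidefinite everywhere and negative definite on a dense open set, and upgrading quasi-negativity to the cohomological statement $c_1(N)<0$ (equivalently, ampleness of $K_N$) is itself a nontrivial step in the cited works, requiring one to rule out degeneration of $K_N$ along subvarieties.

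If your intention was merely to summarize the known strategy behind the cited theorem, the summary is broadly faithful; but as a self-contained blind proof it leaves the hardest analytic and algebro-geometric points unresolved. Since the paper itself treats this theorem as a black box, the honest options are either to do the same --- cite \cite{Liu14}, \cite{WuZheng}, \cite{Zheng02} and stop --- or to actually reproduce the parallel-foliation argument and the constant-rank extension, which is a substantial undertaking well beyond the sketch you have written.
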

More precise statement and various corollaries can be found in \cite{Liu14}. Although it seems to us that the information provided by this splitting theorem is \emph{not} enough to reach Conjecture \ref{question} in its full generality, we can still have the following result.
\begin{proposition}
Let $(M,\omega)$ be as in Conjecture \ref{question}. Then the signed Chern number $(-1)^nc_2c_1^{n-2}[M]>0$. In particular, Conjecture \ref{question} is true for $n=2$.
\end{proposition}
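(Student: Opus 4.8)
The plan is to use the splitting Theorem \ref{LWZ} to reduce to a manifold with ample canonical bundle, and then to invoke Yau's Chern number (Miyaoka--Yau) inequality. First I would pass to the finite cover $M'$ provided by Theorem \ref{LWZ}, so that $M'$ is a flat torus bundle $T^{n-r}$ over a compact K\"{a}hler manifold $N^{r}$ with $c_1(N)<0$, where $r$ is the maximal rank of the Ricci form. Since all Chern numbers are multiplied by the (positive) degree of a finite cover, the sign of $(-1)^nc_2c_1^{n-2}[M]$ coincides with that of $(-1)^nc_2c_1^{n-2}[M']$, so it suffices to prove the inequality on $M'$.

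The crucial point is that quasi-negativity of the Ricci curvature forces the torus factor to be trivial. By the Remark following Conjecture \ref{question}, quasi-negativity of the Ricci curvature is equivalent to $(-1)^nc_1^n[M]>0$; hence $c_1^n[M']\neq 0$. On the other hand, if the fiber dimension $n-r$ were positive, then the flatness of the vertical tangent bundle would give, through the relative tangent sequence $0\to T_{M'/N}\to TM'\to \pi^{\ast}TN\to 0$, the identity $c(TM')=\pi^{\ast}c(TN)$; every top-degree Chern number of $M'$ would then be the pullback of a class on $N^{r}$ and would vanish, since $2n>2r=\dim_{\mathbb{R}}N$. This contradicts $c_1^n[M']\neq 0$, so $r=n$, the fiber is a point, and $M'\cong N$ satisfies $c_1(M')<0$, i.e. $K_{M'}$ is ample.

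It now remains to apply, on $M'$, Yau's Chern number inequality quoted in the Remark after Corollary \ref{reverse Miyaoka-Yau inequality}: since $K_{M'}$ is ample,
$$\frac{n}{2(n+1)}(-c_1)^n\ \leq\ c_2(-c_1)^{n-2}.$$
Because the Ricci curvature is quasi-negative, $(-c_1)^n[M']=(-1)^nc_1^n[M']>0$, so the right-hand side is strictly positive. Noting that $(-1)^{n-2}=(-1)^n$, this is precisely $(-1)^nc_2c_1^{n-2}[M']>0$, and therefore $(-1)^nc_2c_1^{n-2}[M]>0$. For $n=2$ the monomial $c_2c_1^{n-2}$ reduces to $c_2=\chi(M)$, so the same computation yields $(-1)^2\chi(M)>0$ and settles Conjecture \ref{question} in dimension two.

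I expect the main obstacle to be the structural reduction rather than the final estimate: the real content is to see that quasi-negativity of Ricci (through $c_1^n\neq 0$) eliminates the flat torus factor in Theorem \ref{LWZ}, thereby upgrading the mere nonpositivity $c_1(N)<0$ to the genuine ampleness of $K_{M'}$ that Yau's inequality requires. This also explains why the method is confined to the single monomial $c_2c_1^{n-2}$: Yau's inequality controls exactly $c_2$ against $c_1^2$, whereas a strict lower bound for an arbitrary Chern monomial would amount to the full strength of Conjecture \ref{question}.
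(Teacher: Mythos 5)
Your proposal is correct and follows essentially the same route as the paper: pass to the finite cover $M'$ of Theorem \ref{LWZ}, obtain $c_1(M')<0$, apply Yau's inequality, and use multiplicativity of Chern numbers under finite covers. The only (harmless) difference is that you eliminate the torus factor via the vanishing of Chern numbers of a nontrivial flat torus bundle, whereas the paper gets $r=n$ directly from the fact that quasi-negative Ricci curvature means the Ricci form attains maximal rank $n$ at some point.
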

\begin{proof}
Let $M'$ be the finite cover of $M$ as in Theorem \ref{LWZ}. The quasi-negativity of the Ricci curvature on $M$ implies that $c_1(M')<0$. Then Yau's Chern number inequality tells us that
\be\label{Yau Chern inequality}(-1)^nc_2c_1^{n-2}[M']\geq\frac{n}{2(n+1)}(-1)^nc_1^n[M'].\ee
Since Chern numbers are multiplicative under a finite cover, (\ref{Yau Chern inequality}) leads to
$$(-1)^nc_2c_1^{n-2}[M]\geq\frac{n}{2(n+1)}(-1)^nc_1^n[M],$$
from which the conclusion follows.
\end{proof}

\section{Proofs of Theorems \ref{Chern class inequality theorem} and  \ref{Chern number inequality theorem}}\label{first proof}
Let $(M^n,L)$ be a polarized manifold with $L$ a \emph{very ample} line bundle. This $L$ embeds $M$ into some complex projective space $\mathbb{P}^N$ as a nondegenerate smooth projective variety via the Kodaira map (\ref{linear system}).

The embedding $i$ induces a Gauss map $\gamma$ which sends $p\in M$ to $\mathbb{T}_pM$, the $n$-dimensional projective tangent space of $M$ at $p$ in $\mathbb{P}^N$:
\be\label{Gauss map}
\begin{split}
M&\overset{\gamma}{\longrightarrow}
\mathbb{G}_{n}(\mathbb{P}^N)\cong G_{n+1}(\mathbb{C}^{N+1})\\
p&\longmapsto\mathbb{T}_pM,
\end{split}\ee
where $\mathbb{G}_{n}(\mathbb{P}^N)$ is the Grassmannian variety of $n$-dimensional projective subspaces in $\mathbb{P}^N$, which is isomorphic to $G_{n+1}(\mathbb{C}^{N+1})$, the usual complex Grassmannian of $(n+1)$-dimemsional linear spaces in $\mathbb{C}^{N+1}$.

Let $S$ be the rank $n+1$ tautological subbundle over $G_{n+1}(\mathbb{C}^{N+1})$. The bundles $\gamma^{\ast}(S)$, $L$ and the tangent bundle $TM$ are famously related to each other via the following exact sequence (cf. \cite[p. 198]{At})
\be\label{exact seq}0\longrightarrow \underline{\mathbb{C}}\longrightarrow\gamma^{\ast}(S)\otimes L\longrightarrow TM\longrightarrow 0,\ee
where as before $\underline{\mathbb{C}}$ denotes the trivial line bundle on $M$.
\begin{remark}
The geometric model described above has been used in several papers to deduce Chern number inequalities in their context. For instance, Tai used in \cite{Tai} the exact sequence (\ref{exact seq}) and those symmetric polynomials invariant by translation to deduce Chern number inequalities for complete intersections in $\mathbb{P}^N$. This idea was further push forwarded by Manivel in \cite{Manivel}. Du and Sun applied it in \cite{DS} to treat the boundedness of the region given by the Chern ratios.
\end{remark}

\subsection{Proof of Theorem \ref{Chern class inequality theorem}}
Note that the rank $N-n$ universal quotient bundle $Q$ over $G_{n+1}(\mathbb{C}^{N+1})$ is globally generated and so is $\gamma^{\ast}(Q)$ over $M^n$, the pull back under the Gauss map $\gamma$ in (\ref{Gauss map}). Bott and Chern introduced in \cite{BC} a nonnegativity notion for holomorphic vector bundles, which is called \emph{Bott-Chern nonnegativity} in \cite{Li20}. Its precise definition is not important in this article, and we only need the fact that any globally generated vector bundle can be equipped with a Bott-Chern nonnegative Hermitian metric (\cite[(4.2)]{Li20}). So $\gamma^{\ast}(Q)$ admits such a Hermitian metric, say $h$. In this case the Chern forms $c_k\big(\gamma^{\ast}(Q),h\big)$ 
with respect to the canonical Chern connection are nonnegative as real $(k,k)$-forms (\cite[Prop. 3.1]{Li20}). We remark that in \cite{Li20}
they are called ``strongly nonnegative" to distinguish from another weaker nonnegativity. So the Chern classes $c_k\big((\gamma^{\ast}(Q)\big)\geq0$ as cohomology classes.

We claim that
\be\label{key}c_k\big((\gamma^{\ast}(Q)\big)=\sum_{i=0}^k(-1)^i\binom{n+k}{k-i}\cdot s_i(M)\cdot L^{k-i},\ee
from which the inequalities (\ref{Chern class inequality}) follows. Since $\gamma^{\ast}(Q)$ is a rank $N-n$ vector bundle over an $n$-dimensional manifold. So $c_k\big((\gamma^{\ast}(Q)\big)=0$ for $k>\min\{n,N-n\}$.

We now prove (\ref{key}). Note that the total Chern classes of $Q$ and the universal subbundle $S$ are related by $c(Q)c(S)=1$ and therefore $c\big(\gamma^{\ast}(Q)\big)c\big(\gamma^{\ast}(S)\big)=1$. This implies that
\be\label{case3} c\big(\gamma^{\ast}(Q)\big)=\frac{1}{c\big(\gamma^{\ast}(S)\big)}=
s\big(\gamma^{\ast}(S^{\ast})\big)\ee
by the definition of Segre classes in (\ref{segre class}). On the other hand, the exact sequence (\ref{exact seq}) tells us that
\be\label{case4}s\big(\gamma^{\ast}(S^{\ast})\big)=
s\big[(\underline{\mathbb{C}}\oplus T^{\ast}M)\otimes L\big].\ee
Combining (\ref{case3}) with (\ref{case4}) yields
\be
\begin{split}
c_k\big((\gamma^{\ast}(Q)\big)&=s_k\big(\gamma^{\ast}(S^{\ast})\big)\\
&=s_k\big[(\underline{\mathbb{C}}\oplus T^{\ast}M)\otimes L\big]\\
&=\sum_{i=0}^k\binom{n+k}{k-i}
\cdot s_i(\underline{\mathbb{C}}\oplus T^{\ast}M)\cdot L^{k-i}\qquad(\text{by Example \ref{tensor formula}})\\
&=\sum_{i=0}^k(-1)^i\binom{n+k}{k-i}
\cdot s_i(M)\cdot L^{k-i}.
\end{split}
\ee
This completes the proof of (\ref{key}) and hence of the inequalities in (\ref{Chern class inequality}).

In Example \ref{example1} we have showed by directly calculation that the equality cases in (\ref{Chern class inequality}) are satisfied by the pair $(M,L)=\big(\mathbb{P}^n,\mathcal{O}_{\mathbb{P}^n}(1)\big)$.
This fact is now obvious from our proof as in this case the Gauss map $\gamma$ is a constant map and so $\gamma^{\ast}(Q)$ is trivial.

\subsection{Proof of Theorem \ref{Chern number inequality theorem}}
For simplicity we denote by $c_i$ the $i$-th Chern class of $M$. The exact sequence (\ref{exact seq}) says that the total Chern class of $\gamma^{\ast}(S^{\ast})$ is given by
\be\label{chern formula}
\begin{split}
c\big(\gamma^{\ast}(S^{\ast})\big)&=
(1+L)\cdot c(T^{\ast}M\otimes L)\\
&=(1+L)\big[\sum_{i=0}^n(-1)^ic_i\cdot(1+L)^{n-i}\big].
\qquad(\text{by Example \ref{tensor formula}})
\end{split}\nonumber
\ee
In particular
\begin{eqnarray}\label{c1c2}
\left\{\begin{array}{ll}
c_1\big(\gamma^{\ast}(S^{\ast})\big)=-c_1+(n+1)L\\
c_2\big(\gamma^{\ast}(S^{\ast})\big)=\frac{n(n+1)}{2}L^2-nc_1L+c_2.
\end{array} \right.
\end{eqnarray}

Note that $S^{\ast}$ is also globally generated and so is $\gamma^{\ast}(S^{\ast})$. As mentioned above $\gamma^{\ast}(S^{\ast})$ can be endowed with a Bott-Chern nonnegative Hermitian metric. Then we have the following Chern number inequality, which is a special case of \cite[Thm 3.2]{Li20}
\be\label{case5}c_2\big(\gamma^{\ast}(S^{\ast})\big)
\Big[c_1\big(\gamma^{\ast}(S^{\ast})\big)\Big]^{n-2}\leq
\Big[c_1\big(\gamma^{\ast}(S^{\ast})\big)\Big]^{n}.
\ee
Substituting (\ref{c1c2}) into (\ref{case5}) yields the desired inequality (\ref{c1c2new}). Note that (\ref{case5}) can also be deduced from \cite[Coro. 2.6]{DPS} as $\gamma^{\ast}(S^{\ast})$ is also nef.

\section{Proofs of Theorems \ref{Euler number inequality} and \ref{main application}}\label{second proof}
\subsection{Proof of Theorem \ref{Euler number inequality}}
Under the notations introduced in Section \ref{main results}, we start with the following
\begin{definition}
For each partition $\lambda=(\lambda_1,\lambda_2,\ldots,\lambda_k)\in\Gamma(k,r)$, the \emph{Schur polynomial} $S_{\lambda}(c_1,\ldots,c_r)\in\mathbb{Z}[c_1,\ldots,c_r]$ is defined as follows.
\be
\begin{split}
S_{\lambda}(c_1,\ldots,c_r):= \ &\text{det}(c_{\lambda_i-j+j})_{1\leq i,j\leq k}\qquad(\text{$i:$ row, $j:$ column})\\
= \ &
\begin{vmatrix}
c_{\lambda_1} & c_{\lambda_1+1} &\cdots &c_{\lambda_1+k-1}\\
c_{\lambda_2-1} & c_{\lambda_2} &\cdots&c_{\lambda_2+k-2}\\
\vdots&\vdots&\ddots&\vdots\\
c_{\lambda_k-k+1}&c_{\lambda_k-k+2}&\cdots&c_{\lambda_k}
\end{vmatrix}
,\end{split}\nonumber\ee
where we adopt the convention that $c_0:=1$ and $c_i:=0$ if $i\notin[0,r]$.
\end{definition}
We shall use the following two special Schur polynomials.
\begin{example}
We have
\be\label{specialschur1}S_{(i,0,\ldots,0)}(c_1,\ldots,c_r)=c_i\ee and
\be\label{specialschur2}\begin{split}
S_{(k-i,i,0,\ldots,0)}(c_1,\ldots,c_r)&=
\begin{vmatrix}
c_{k-i} & c_{k-i+1} &\ast &\cdots &\ast\\
c_{i-1} & c_{i} &\ast&\cdots&\ast\\
0&0&1&\cdots&\ast\\
\vdots&\vdots&\vdots&\ddots&\\
0&0&0&\cdots&1
\end{vmatrix}\qquad \big(0\leq i\leq[\frac{k}{2}]\big)\\
&=c_{k-i}c_i-c_{k-i+1}c_{i-1}.
\end{split}\ee
\end{example}

Schur polynomials have appeared and played important roles in algebraic combinatorics, representation theory, algebraic geometry and so on. We refer to \cite{Ful} and \cite{Ma} for various facts on them. What we need in the proof is the following special case of the remarkable Littlewood-Richardson rule (\cite[p. 142]{Ma}).

\begin{lemma}\label{positivity of Schur}
Denote by
$$P(k,r):=\big\{\sum_{\lambda\in\Gamma(k,r)} a_{\lambda}S_{\lambda}(c_1,\ldots,c_r)~\big|~a_{\lambda}\geq0\,\big\}.$$
Then $$P(k_1,r)\cdot P(k_2,r)\subset P(k_1+k_2,r).$$
\end{lemma}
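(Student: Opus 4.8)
The plan is to reduce the statement to a single product of Schur polynomials and then invoke the Littlewood-Richardson rule. Since $P(k,r)$ is by definition the cone of nonnegative combinations $\sum_{\lambda}a_{\lambda}S_{\lambda}$, and multiplication distributes over such sums, a general element of $P(k_1,r)\cdot P(k_2,r)$ is a nonnegative combination $\sum_{\mu,\nu}a_{\mu}b_{\nu}\,S_{\mu}S_{\nu}$ with $a_{\mu},b_{\nu}\geq 0$. As a sum of elements of the cone $P(k_1+k_2,r)$ again lies in that cone, it suffices to show that for each individual pair $\mu\in\Gamma(k_1,r)$, $\nu\in\Gamma(k_2,r)$ the product $S_{\mu}S_{\nu}$ lies in $P(k_1+k_2,r)$, i.e. expands as a nonnegative integer combination of the $S_{\lambda}$ with $\lambda\in\Gamma(k_1+k_2,r)$.

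To bring the Littlewood-Richardson rule to bear I would pass from $\mathbb{Z}[c_1,\ldots,c_r]$ to the ring $\Lambda_r$ of symmetric polynomials in $r$ variables $x_1,\ldots,x_r$ (the Chern roots) via the graded isomorphism $c_i\mapsto e_i(x_1,\ldots,x_r)$, under which the convention $c_i=0$ for $i>r$ is precisely the vanishing of $e_i$ in $r$ variables. The dual Jacobi-Trudi identity then identifies $S_{\lambda}(c_1,\ldots,c_r)=\det(c_{\lambda_i-i+j})$ with the genuine Schur function $s_{\lambda'}(x_1,\ldots,x_r)$ indexed by the conjugate partition $\lambda'$; as $\lambda$ ranges over $\Gamma(k,r)$ (partitions of $k$ with all parts $\leq r$) the conjugates $\lambda'$ range over all partitions of $k$ with at most $r$ parts, which index exactly the nonzero Schur polynomials of degree $k$ in $r$ variables and form a $\mathbb{Z}$-basis of the degree-$k$ part of $\Lambda_r$.

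With this dictionary in place I would apply the Littlewood-Richardson rule $s_{\mu'}s_{\nu'}=\sum_{\rho}c^{\rho}_{\mu'\nu'}s_{\rho}$, whose structure constants $c^{\rho}_{\mu'\nu'}$ are nonnegative integers. Restriction to $r$ variables annihilates every term with $\ell(\rho)>r$, so only $\rho$ with at most $r$ parts survive; each surviving $\rho$ satisfies $|\rho|=|\mu'|+|\nu'|=k_1+k_2$. Conjugating back gives $S_{\mu}S_{\nu}=\sum_{\lambda}c^{\lambda'}_{\mu'\nu'}S_{\lambda}$ where $\lambda=\rho'$ has largest part $\lambda_1=\ell(\rho)\leq r$ and weight $k_1+k_2$, hence $\lambda\in\Gamma(k_1+k_2,r)$, with all coefficients nonnegative. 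This is exactly the asserted membership.

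The one point demanding care---really the only obstacle---is the upper bound $\lambda_j\leq r$ on the parts. The Littlewood-Richardson expansion taken in the full ring of symmetric functions can produce Schur functions with more than $r$ parts, which would correspond to $S_{\lambda}$ having a part exceeding $r$ and thus lying outside $\Gamma(k_1+k_2,r)$. The resolution is that these are precisely the terms killed by specialization to $r$ variables, which matches the convention $c_i=0$ for $i>r$ built into the determinantal definition of $S_{\lambda}$; confirming that this specialization and the $c_i=0$ convention agree is the crux of the argument, after which the weight bookkeeping and the nonnegativity of the coefficients are immediate.
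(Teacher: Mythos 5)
Your proposal is correct and follows the same route as the paper, which simply cites the Littlewood--Richardson rule from Macdonald without further argument. You supply the details the citation leaves implicit --- the passage to Chern roots, the dual Jacobi--Trudi identification $S_{\lambda}(c_1,\ldots,c_r)=s_{\lambda'}(x_1,\ldots,x_r)$, and the check that truncation to $r$ variables matches the convention $c_i=0$ for $i>r$ so that the surviving conjugate partitions land in $\Gamma(k_1+k_2,r)$ --- and this bookkeeping is exactly right.
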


Now we are ready to prove Theorem \ref{Euler number inequality}.
For convenience, we denote by $$C_i:=c_i(E),\qquad S_{\lambda}:=S_{\lambda}\big(c_1(E),\ldots,c_r(E)\big).$$
The following Fulton-Lazarsfeld type inequalities for nef vector bundles over compact K\"{a}hler manifolds are due to Demailly, Peternell and Schneider (\cite[\S 2]{DPS}).
\be\label{DPS}\int_MS_{\lambda}\wedge[\omega]^{n-k}\geq0,
\qquad \Big(1\leq k\leq n,~\lambda\in\Gamma(k,r)\Big).\ee
In view of (\ref{DPS}) and the definition of $P(k,r)$, in order to prove Theorem \ref{Euler number inequality}, it suffices to show
\be\label{basic inequality}C_{\lambda_1}C_{\lambda_2}\cdots
C_{\lambda_k}-C_{\lambda_1+\cdots\lambda_k}\in P(\sum_{i=1}^k\lambda_i,r),\ee
which follows from the following two lemmas.

\begin{lemma}
We have
\be\label{claim1}C_{\lambda_1+\cdots\lambda_{t-1}}
C_{\lambda_t}-C_{\lambda_1+\cdots\lambda_t}\in P(\sum_{i=1}^t\lambda_i,r),\qquad\forall~2\leq t\leq k.\ee
\end{lemma}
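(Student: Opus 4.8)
The plan is to reduce the claim to the two-row Schur polynomial identity (\ref{specialschur2}) and then telescope. Write $a:=\lambda_1+\cdots+\lambda_{t-1}$ and $b:=\lambda_t$, so that (\ref{claim1}) reads $C_aC_b-C_{a+b}\in P(a+b,r)$. Since $\lambda$ is a partition (its parts are weakly decreasing) and $t\geq 2$, we have $a\geq\lambda_{t-1}\geq\lambda_t=b\geq 0$; this inequality $a\geq b$ is exactly what guarantees that all the two-row partitions produced below are genuine partitions.

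The key algebraic input is a rearrangement of (\ref{specialschur2}): reading that formula with first part $m$ and second part $\ell$ (where $m\geq \ell\geq 0$) gives $S_{(m,\ell)}=C_mC_\ell-C_{m+1}C_{\ell-1}$, that is, $C_mC_\ell=S_{(m,\ell)}+C_{m+1}C_{\ell-1}$. I would apply this repeatedly, starting from $C_aC_b$ and each time lowering the second index by one while raising the first:
\[
C_aC_b=S_{(a,b)}+C_{a+1}C_{b-1}=S_{(a,b)}+S_{(a+1,b-1)}+C_{a+2}C_{b-2}=\cdots.
\]
After $b$ steps the process terminates at the remainder $C_{a+b}C_0=C_{a+b}$, yielding the telescoping identity
\[
C_aC_b-C_{a+b}=\sum_{j=0}^{b-1}S_{(a+j,b-j)}.
\]

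It then remains to observe that the right-hand side lies in $P(a+b,r)$. For each $0\leq j\leq b-1$ the pair $(a+j,b-j)$ satisfies $a+j\geq b-j\geq 1$ (using $a\geq b$), so it is a partition of weight $a+b$. Whenever its largest part obeys $a+j\leq r$ it belongs to $\Gamma(a+b,r)$ and $S_{(a+j,b-j)}$ is one of the admissible Schur polynomials, while if $a+j>r$ then $C_{a+j}=C_{a+j+1}=0$ forces $S_{(a+j,b-j)}=0$. In either case every summand lies in $P(a+b,r)$, and hence so does their sum. This proves (\ref{claim1}), which together with the remaining lemma gives (\ref{basic inequality}).

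The computation is entirely formal once (\ref{specialschur2}) is in hand, so I do not anticipate a serious obstacle; the only point requiring care is the bookkeeping that keeps each intermediate partition $(a+j,b-j)$ weakly decreasing. This is precisely where the hypothesis that $\lambda$ is a genuine ordered partition — yielding $a\geq b$ — enters, and I would emphasize that the same telescoping would break down for an arbitrary unordered composition.
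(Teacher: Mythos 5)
Your proof is correct and is essentially identical to the paper's: the authors prove (\ref{claim1}) by exactly the same telescoping sum $C_aC_b-C_{a+b}=\sum_{i=0}^{b-1}\bigl(C_{a+i}C_{b-i}-C_{a+i+1}C_{b-i-1}\bigr)=\sum_{i=0}^{b-1}S_{(a+i,b-i,0,\ldots,0)}$ based on (\ref{specialschur2}). Your extra remarks on why $a\geq b$ holds and why terms with $a+j>r$ vanish are correct points of bookkeeping that the paper leaves implicit.
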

\begin{proof}
\be
\begin{split}
C_{\lambda_1+\cdots\lambda_{t-1}}
C_{\lambda_t}-C_{\lambda_1+\cdots\lambda_t}
=&
\sum_{i=0}^{\lambda_t-1}\big(C_{\lambda_1+\cdots\lambda_{t-1}+i}
C_{\lambda_t-i}-C_{\lambda_1+\cdots\lambda_{t-1}+i+1}C_{\lambda_t-i-1}\big)\\
=&\sum_{i=0}^{\lambda_t-1}S_{(\lambda_1+\cdots\lambda_{t-1}+i,\lambda_t-i,0,\ldots,0)}
\qquad\big(\text{by (\ref{specialschur2})}\big)\\
\in& P(\sum_{i=1}^t\lambda_i,r).
\end{split}\nonumber
\ee
\end{proof}

\begin{lemma}
The inequality (\ref{basic inequality}) holds true.
\end{lemma}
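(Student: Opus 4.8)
The plan is to prove (\ref{basic inequality}) by induction on the number $k$ of factors, with the preceding lemma (\ref{claim1}) serving as the engine and the multiplicativity of the cones from Lemma \ref{positivity of Schur} gluing the pieces together. Throughout I would write $w_t:=\lambda_1+\cdots+\lambda_t$ for the partial weights, so that the goal reads $C_{\lambda_1}\cdots C_{\lambda_k}-C_{w_k}\in P(w_k,r)$.

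First I would record the structural facts about $P(m,r)$ that I need: it is closed under addition and under scaling by nonnegative constants (immediate from its definition), each single Chern class satisfies $C_m=c_m(E)=S_{(m,0,\ldots,0)}\in P(m,r)$ by (\ref{specialschur1}), and $P(m_1,r)\cdot P(m_2,r)\subset P(m_1+m_2,r)$ by Lemma \ref{positivity of Schur}. These are the only properties entering the argument.

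The base case $k=2$ is exactly (\ref{claim1}) with $t=2$. For the inductive step I would assume the statement for $k-1$ factors and exploit the telescoping identity
\[
C_{\lambda_1}\cdots C_{\lambda_k}-C_{w_k}
=\big(C_{\lambda_1}\cdots C_{\lambda_{k-1}}-C_{w_{k-1}}\big)\,C_{\lambda_k}
+\big(C_{w_{k-1}}C_{\lambda_k}-C_{w_k}\big).
\]
The first summand is a product of $C_{\lambda_1}\cdots C_{\lambda_{k-1}}-C_{w_{k-1}}\in P(w_{k-1},r)$ (the induction hypothesis) with $C_{\lambda_k}\in P(\lambda_k,r)$, so it lies in $P(w_{k-1}+\lambda_k,r)=P(w_k,r)$ by Lemma \ref{positivity of Schur}; the second summand lies in $P(w_k,r)$ by (\ref{claim1}) applied with $t=k$. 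Since $P(w_k,r)$ is closed under addition, the sum lies in $P(w_k,r)$, which closes the induction.

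Since the genuine positivity content — the Littlewood--Richardson statement of Lemma \ref{positivity of Schur} and the sign-difference expansion (\ref{specialschur2}) underlying (\ref{claim1}) — has already been isolated, what remains is purely formal and I anticipate no real obstacle. The only point demanding a little care is the weight bookkeeping: verifying that both summands of the telescoping identity land in the \emph{same} cone $P(w_k,r)$ rather than in cones of differing weights, which the partial-weight notation makes transparent.
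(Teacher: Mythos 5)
Your argument is correct and is essentially the paper's proof in inductive clothing: unrolling your induction reproduces exactly the paper's explicit telescoping sum $\sum_{i=1}^{k-1}\big(C_{\lambda_1+\cdots+\lambda_i}C_{\lambda_{i+1}}-C_{\lambda_1+\cdots+\lambda_{i+1}}\big)C_{\lambda_{i+2}}\cdots C_{\lambda_k}$, and both arguments rest on the same three ingredients, namely (\ref{claim1}), the identity $C_m=S_{(m,0,\ldots,0)}$ from (\ref{specialschur1}), and the multiplicativity of the cones in Lemma \ref{positivity of Schur}. No gap.
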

\begin{proof}
\be\begin{split}
&C_{\lambda_1}C_{\lambda_2}\cdots
C_{\lambda_k}-C_{\lambda_1+\cdots+\lambda_k}\\
=&\sum_{i=1}^{k-1}\big(C_{\lambda_1+\cdots+\lambda_i}C_{\lambda_{i+1}}\cdots
C_{\lambda_k}-C_{\lambda_1+\cdots+\lambda_{i+1}}C_{\lambda_{i+2}}\cdots
C_{\lambda_k}\big)\\
=&\sum_{i=1}^{k-1}\big(C_{\lambda_1+\cdots+\lambda_i}C_{\lambda_{i+1}}-
C_{\lambda_1+\cdots+\lambda_{i+1}}\big)C_{\lambda_{i+2}}\cdots
C_{\lambda_k}\\
=&\sum_{i=1}^{k-1}\big(C_{\lambda_1+\cdots+\lambda_i}C_{\lambda_{i+1}}-
C_{\lambda_1+\cdots+\lambda_{i+1}}\big)S_{(\lambda_{i+2},0,\ldots,0)}\cdots
S_{(\lambda_k,0,\ldots,0)}.\qquad\big(\text{by (\ref{specialschur1})}\big)\\
\in&
\sum_{i=1}^{k-1}P(\lambda_1+\cdots+\lambda_{i+1},r)P(\lambda_{i+2},r)\cdots P(\lambda_k,r)
\qquad\big(\text{by (\ref{claim1})}\big)\\
\subset&P(\sum_{i=1}^k\lambda_i,r).\qquad\big(\text{by Lemma \ref{positivity of Schur}}\big)
\end{split}\nonumber
\ee
This completes the proof of (\ref{basic inequality}) and hence Theorem \ref{Euler number inequality}.
\end{proof}

\subsection{Proof of Theorem \ref{main application}}
Its proof is an application of the following Howard-Smyth-Wu's splitting theorem (\cite{HSW}, \cite{Wu}) and Mok's uniformization theorem (\cite{Mok88}).
\begin{theorem}
Let $(M,\omega)$ be an $n$-dimensional compact K\"{a}hler manifold with nonnegative holomorphic bisectional curvature. Let $q\in\mathbb{Z}_{\geq0}$ be the irregularity of $M$, which is one half of the first Betti number of $M$.
\begin{enumerate}
\item
{\rm(}Mok{\rm)} If $M$ is simply-connected, then it is holomorphically isometric to
\be\label{Mok}
(\mathbb{P}^{N_1},\theta_1)\times
\cdots\times(\mathbb{P}^{N_k},\theta_k)\times(M_1,g_1)\times\cdots
\times(M_p,g_p),\ee
where $\theta_i$ are K\"{a}hler metrics on $\mathbb{P}^{N_i}$ carrying nonnegative holomorphic bisectional curvature, and $(M_i,g_i)$ are irreducible compact Hermitian symmetric spaces of rank $\geq2$ equipped with the canonical metrics.

\item
{\rm(}Howard-Smyth-Wu{\rm)} If $\pi_1(M)$ is nontrivial, then it is infinite. Thus $q>0$ and the Albanese map $M\longrightarrow T^q_{\mathbb{C}}$ is a locally isometrically trivial holomorphic fiber bundle, where $T^q_{\mathbb{C}}$ is equipped with flat metric and the fiber is holomorphically isometric to (\ref{Mok}).
\end{enumerate}
\end{theorem}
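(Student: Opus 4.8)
The plan is to prove this by analyzing the universal cover $\pi\colon\widetilde{M}\to M$, which inherits a complete K\"ahler metric with nonnegative holomorphic bisectional curvature, and to separate the ``flat part'' from the ``curved part''. First I would extract the flat factor through the Bochner technique. Nonnegative holomorphic bisectional curvature forces nonnegative Ricci curvature, so on the compact manifold $M$ every harmonic $1$-form is parallel; in particular the $q=\tfrac{1}{2}b_1(M)$ independent holomorphic $1$-forms are parallel $(1,0)$-forms. Pulling these back to $\widetilde{M}$ yields a parallel holomorphic flat distribution of rank $q$. By the de Rham decomposition theorem, together with the fact that the K\"ahler structure is preserved under holonomy, $\widetilde{M}$ splits isometrically and holomorphically as $(\mathbb{C}^{q},g_0)\times(M',g')$, where $g_0$ is flat and $M'$ is a simply connected compact K\"ahler manifold with nonnegative holomorphic bisectional curvature and $b_1(M')=0$.

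The heart of the matter is the classification of the compact factor $M'$, which is Mok's uniformization theorem and constitutes part (1). Here I would run the K\"ahler--Ricci flow starting from $g'$: the decisive analytic input, due to Bando and Mok, is that nonnegative holomorphic bisectional curvature is preserved along the flow. Decomposing $M'$ into de Rham--irreducible K\"ahler factors and invoking Berger's holonomy classification gives, for each factor, a dichotomy: either its restricted holonomy is the full $U(m)$, or the factor is locally Hermitian symmetric. The symmetric factors of rank $\geq 2$ are exactly the $(M_i,g_i)$, while the rank-one symmetric factors are among the $(\mathbb{P}^{N_i},\theta_i)$. For a generic-holonomy factor one applies the strong maximum principle to the evolution equation of the curvature operator to upgrade nonnegativity to \emph{strictly} positive holomorphic bisectional curvature somewhere, and then the resolution of the Frankel conjecture by Mori and Siu--Yau identifies such a factor biholomorphically with a projective space $\mathbb{P}^{N_i}$, whose original metric $\theta_i$ need only be nonnegatively curved. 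This assembles the product description (\ref{Mok}).

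For part (2), I would read off the global structure from the splitting of $\widetilde{M}$. The parallel holomorphic $1$-forms descend to give the Albanese map $\alpha\colon M\to T^q_{\mathbb{C}}$; since these forms are parallel, $\alpha$ is a Riemannian submersion whose horizontal distribution is integrable and totally geodesic, and a monodromy argument using the holonomy of this flat connection shows that $\alpha$ is a locally isometrically trivial holomorphic fiber bundle with fiber $M'$. The infiniteness of a nontrivial $\pi_1(M)$ is then the contrapositive of ``$q=0\Rightarrow\pi_1(M)$ trivial'': if $q=0$ there is no flat factor, so $\widetilde{M}=M'$ is a \emph{compact} product of projective spaces and irreducible compact Hermitian symmetric spaces of rank $\geq 2$. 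Each such factor has positive Euler characteristic, so the holomorphic Lefschetz fixed point formula shows every finite-order automorphism has a fixed point; hence no nontrivial finite group can act freely on the product, forcing $\pi_1(M)=1$. When instead $q>0$ one has $b_1(M)>0$, so $\pi_1(M)$ is infinite automatically.

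The main obstacle is the holonomy dichotomy in part (1), specifically showing that a de Rham--irreducible factor with full $U(m)$ holonomy actually acquires \emph{strictly} positive holomorphic bisectional curvature. Preservation of nonnegativity along the flow is itself delicate, requiring the tensor-maximum-principle machinery adapted to the bisectional curvature (this is precisely what Bando's and Mok's work supplied), and extracting strict positivity on an irreducible factor from mere nonnegativity is where the strong maximum principle must be combined with a careful analysis of the zero set of the curvature operator. Everything downstream---the appeal to the Frankel conjecture, the assembly of the product, and the fiber-bundle description of the Albanese map---is comparatively formal once this positivity is in hand.
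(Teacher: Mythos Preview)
The paper does not give a proof of this theorem. It is quoted as a known result---part (1) attributed to Mok and part (2) to Howard--Smyth--Wu, with references to \cite{Mok88}, \cite{HSW}, \cite{Wu}---and is then invoked as a black box in the proof of Theorem~\ref{main application}. So there is no ``paper's own proof'' against which to compare your proposal.

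That said, your sketch does track the broad outlines of the original arguments. The one point you slide past too quickly is the \emph{compactness} of the factor $M'$. The de~Rham splitting of the universal cover $\widetilde{M}=(\mathbb{C}^q,g_0)\times(M',g')$ yields only that $M'$ is complete and simply connected; the assertion that $M'$ is compact is precisely one of the nontrivial outputs of the Howard--Smyth--Wu analysis, obtained by examining how $\pi_1(M)$ respects the product splitting and showing (after a finite cover) that it acts trivially on the second factor. Without this step you cannot run the K\"ahler--Ricci flow argument on $(M',g')$ as you propose (Bando--Mok and the appeal to Mori/Siu--Yau all require compactness), nor can you conclude in part~(2) that the Albanese fiber is actually the product (\ref{Mok}). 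Your infiniteness argument for $\pi_1(M)$ likewise leans on this when you write ``$\widetilde{M}=M'$ is a \emph{compact} product''. Filling this in is not hard once one knows to look for it, but it is a genuine logical gap in the sketch as written.
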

We can now proceed to prove Theorem \ref{main application}.
\begin{proof}
First note that the Euler number $c_n(\cdot)$ of the manifolds of the form (\ref{Mok}) is strictly positive. Indeed, Since odd-dimensional homologies of irreducible compact Hermitian symmetric spaces are zero and so all $c_n(M_i)>0$. Therefore
\be\label{Eulerpositive}\begin{split}
&c_n(\mathbb{P}^{N_1}\times
\cdots\times\mathbb{P}^{N_k}\times M_1\times\cdots
\times M_p)\\
=&c_n(\mathbb{P}^{N_1})\cdots
c_n(\mathbb{P}^{N_k})c_n(M_1)\cdots c_n(M_p)>0.
\end{split}\ee

If $M$ is simply-connected, $M$ is of the form $(\ref{Mok})$, whose Euler number is strictly positive due to (\ref{Eulerpositive}). Then Theorem \ref{Euler number inequality} implies that all the Chern numbers of $M$ are positive.

If $\pi_1( M)$ is nontrivial, then by Howard-Smyth-Wu's splitting result we have nontrivial Albanese variety $T^q_{\mathbb{C}}$. Since the Ricci curvature is quasi-positive along the fiber and vanishes along $T^q_{\mathbb{C}}$, the maximal rank of Ricci curvature of $\omega$ is less than $n$. This implies that the Chern number $c_1^n[M]=0$ and consequently all the Chern numbers vanish. This completes the desired proof.
\end{proof}

\begin{remark}
We can also apply a result in \cite{DPS} to give a slightly different proof. If $c_1^n[M]>0$, then $M$ is Fano due to \cite[Prop. 3.10]{DPS}. Since a Fano manifold is simply-connected (\cite[p. 225]{Zheng00}), this reduces to the first case above. Otherwise $c_1^n[M]=0$ and this reduces to the second case.
\end{remark}

\section{Proof of Theorem \ref{level2equality}}\label{third proof}
Assume that the equality case of (\ref{class inequality-1}) in Theorem \ref{level2equality} holds. This, as we have seen in the proof of Theorem \ref{Chern class inequality theorem}, is equivalent to the second Segre class $s_2(\gamma^{\ast}S^{\ast})=0$. We want to deduce from it that either $(M,L)=\big({\mathbb P}^n, {\mathcal O}_{\mathbb{P}^n}(1)\big)$, or $N-n=1$, i.e., the Kodaira map (\ref{linear system}) embeds $M^n$ as a hypersurface in ${\mathbb P}^{n+1}$. To this end, we first introduce a quantity $d$, which is the maximum of the dimensions of \emph{the osculating spaces of order $2$} on $M$, and apply some arguments of algebro-geometric nature to show that the desired conclusion holds true if $d\leq n+1$. Then we shall show that the inequality $d\leq n+1$ follows from $s_2(\gamma^{\ast}S^{\ast})=0$.

\subsection{The osculating space of order $2$}
Let $z=(z^1, \ldots , z^n)$ be a local holomorphic coordinate system centered at $p\in U\subset M$, and $\phi: U \rightarrow {\mathbb C}^{N+1}\setminus \{ 0\}$ a local lifting of the embedding $i$ in (\ref{linear system}) around $p$. \emph{The osculating space of order $k$} at $p$ is defined to be the projective subspace $\mathbb{T}^k_p(M)$ in ${\mathbb P}^N$ passing through $p$, spanned by $[\frac{\partial \phi}{\partial z^I}(0)]$ for all multi-indices $I=(i_1, \ldots , i_n)$ with length $|I|=i_1+\cdots +i_n\leq k$. It turns out that $\mathbb{T}^k_p(M)$ is independent of the local coordinate and lifting chosen. By definition $\mathbb{T}_p(M)\subset\mathbb{T}^k_p(M)$ and $\mathbb{T}^1_p(M)=\mathbb{T}_p(M)\cong {\mathbb P}^n$ is precisely the projective tangent space at $p$.

Let $\text{Tan}(M)$ and $\text{Sec}(M)$ be the {\em tangent variety} and {\em secant variety} of $M$, which are defined to be 
$$\text{Tan}(M):=\overline{\bigcup_{p\in M}\mathbb{T}_p(M)},\qquad
\text{Sec}(M):=\overline{\{\text{lines $\overline{uv}$}~|~u,v\in M,~u\neq v\}},$$
whose expected (maximal) dimensions are $2n$ and $2n+1$ respectively.

Below we focus on $\mathbb{T}^2_p(M)$, the osculating spaces of order $2$. Unlike the case of $k=1$, the dimensions of $\mathbb{T}^k_p(M)$ ($k\geq 2$) may vary and thus let $d:=\max_{p\in M}\text{dim}\mathbb{T}^2_p(M)$, the maximum of the dimensions of $\mathbb{T}^2_p(M)$ on $M$. The following lemma shows that the condition of ``$d\leq n+1$" shall yield the desired conclusion.

\begin{lemma}
If $d\leq n+1$, then either $(M,L)=\big({\mathbb P}^n, {\mathcal O}_{\mathbb{P}^n}(1)\big)$, or $N-n=1$.
\end{lemma}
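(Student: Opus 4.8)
The plan is to reinterpret the hypothesis $d\le n+1$ as a rank-one condition on the projective second fundamental form and then argue case by case. Working with a local lift $\phi\colon U\to\mathbb{C}^{N+1}\setminus\{0\}$ of the embedding $i$, the affine cone over $\mathbb{T}^2_p(M)$ is spanned by $\phi$, the first derivatives $\partial_a\phi$, and the second derivatives $\partial_a\partial_b\phi$ at $p$. Since $\phi$ together with the $\partial_a\phi$ already span the $(n+1)$-dimensional cone over the projective tangent space $\mathbb{T}_p(M)$, one gets $\dim\mathbb{T}^2_p(M)=n+r(p)$, where $r(p)$ is the number of second derivatives that are linearly independent modulo $\mathbb{T}_p(M)$; equivalently $r(p)$ is the dimension of the image of the projective second fundamental form $\mathrm{II}_p$. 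Hence $d\le n+1$ is exactly the statement that $r(p)\le 1$ for every $p$, and the two alternatives in the lemma should correspond to $r\equiv 0$ and to $r\equiv 1$ on a dense open set.

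The case $r\equiv 0$ is immediate: every $\partial_a\partial_b\phi$ lies in the span of $\phi$ and the $\partial_c\phi$, so $\mathrm{II}\equiv 0$ and $M$ is totally geodesic in $\mathbb{P}^N$; a connected totally geodesic projective submanifold is a linear subspace, and the nondegeneracy of $i(M)$ forces this $\mathbb{P}^n$ to be all of $\mathbb{P}^N$, giving $(M,L)=\big(\mathbb{P}^n,\mathcal{O}_{\mathbb{P}^n}(1)\big)$. The substantial case is $r(p)=1$ on a dense open set $U$. There I would choose, over $U$, a normal vector $\nu$ so that $\partial_a\partial_b\phi\equiv q_{ab}\,\nu\pmod{\mathbb{T}_p(M)}$ for a nonzero symmetric matrix $(q_{ab})$; then the cone over $\mathbb{T}^2_p(M)$ is $\mathrm{span}\{\phi,\partial_a\phi,\nu\}$, an $(n+2)$-dimensional subspace of $\mathbb{C}^{N+1}$. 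The whole point is to show this subspace is independent of $p$, i.e. that the order-two osculating map $p\mapsto\mathbb{T}^2_p(M)$ is constant on $U$. This is equivalent to the vanishing of the third fundamental form: differentiating the relation once more, $\partial_a\partial_b\partial_c\phi$ automatically lands in $\mathrm{span}\{\phi,\partial_d\phi,\nu\}$ except for the term $q_{ab}\,\partial_c\nu$, whose component $w_c$ transverse to $\mathbb{T}^2_p(M)$ is the only obstruction. Symmetry of the mixed third partials then forces the vector identity $q_{ab}\,w_c=q_{ac}\,w_b$ for all $a,b,c$. A short linear-algebra argument shows that if $(q_{ab})$ has rank at least two this system admits only $w\equiv 0$; hence $\mathbb{T}^2_p(M)$ is infinitesimally constant, and by connectedness it equals a fixed $\mathbb{P}^{n+1}_0$ for all $p\in U$. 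Since $p\in\mathbb{T}^2_p(M)$, taking closures gives $M\subset\mathbb{P}^{n+1}_0$, and nondegeneracy of $M$ in $\mathbb{P}^N$ yields $N=n+1$, that is $N-n=1$.

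The main obstacle I foresee is the degenerate sub-case in which $(q_{ab})$ has rank exactly one throughout $U$, since then $q_{ab}w_c=q_{ac}w_b$ no longer forces $w\equiv 0$ and the osculating space could a priori move. I would treat this by exploiting that a rank-one projective second fundamental form means $M$ carries a relative-nullity distribution of dimension $n-1$ whose leaves are open subsets of linear $\mathbb{P}^{n-1}$'s in $M$; a smooth nondegenerate $M$ of this ruled type must then either be excluded on smoothness grounds (cones are singular along their vertices) or be shown to still lie in a single $\mathbb{P}^{n+1}$ by analyzing the osculating flag along the rulings. Pinning down this degenerate case, together with globalizing the constancy conclusion across the proper closed locus $M\setminus U$ where $r$ drops to $0$, is where the real work lies; the rank-$\ge 2$ part of the argument is then essentially the symmetry identity above.
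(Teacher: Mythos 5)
Your reduction of $d\le n+1$ to the pointwise condition $\dim\mathrm{Im}\,\mathrm{II}_p\le 1$ is correct, and the rank-$\ge 2$ half of your argument (the symmetry identity $q_{ab}w_c=q_{ac}w_b$ forcing the transverse component of $\partial\nu$ to vanish, hence local constancy of $\mathbb{T}^2_p(M)$, hence $M\subset\mathbb{P}^{n+1}_0$ and $N-n=1$ by nondegeneracy) is sound. But the case you flag at the end --- $(q_{ab})$ of rank exactly one on a dense open set --- is a genuine gap, not a routine loose end. In that case $\ker\mathrm{II}_p$ is $(n-1)$-dimensional, so the Gauss map of $M$ is degenerate with positive-dimensional fibres; ruling this out (or reducing it to the linear case) requires the nontrivial theorem that a \emph{smooth} complete variety with degenerate Gauss map is a linear subspace --- a consequence of Zak's theorem on tangencies, or of the linearity of Gauss fibres combined with a focal-point argument showing that positive-dimensional fibres must meet the singular locus. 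Your proposal only gestures at ``smoothness grounds'' and ``analyzing the osculating flag along the rulings'' without supplying this step, and you acknowledge yourself that this is where the real work lies; as written, the proof is incomplete exactly there.

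For comparison, the paper sidesteps this case analysis entirely by arguing globally with the tangent variety: a lemma of Bompiani gives $\mathbb{T}_q(\mathrm{Tan}(M))\subset\mathbb{T}^2_p(M)$ for generic $q\in\mathrm{Tan}(M)$, so $d\le n+1$ forces $\dim\mathrm{Tan}(M)\le n+1$; the case $\dim\mathrm{Tan}(M)=n$ gives $M=\mathbb{P}^n$, the case $\mathrm{Tan}(M)=\mathbb{P}^{n+1}$ gives $N-n=1$, and the remaining possibility ($\dim\mathrm{Tan}(M)=n+1$ but $\mathrm{Tan}(M)\ne\mathbb{P}^{n+1}$) is excluded by Fulton--Hansen (a deficient tangent variety equals the secant variety) together with Zak's linear normality bound $\dim\mathrm{Sec}(M)\ge\frac{3}{2}n+1\ge n+2$. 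Both routes ultimately lean on deep facts of Zak type about smooth subvarieties of projective space; if you want to complete your version, the honest fix is to cite the degenerate-Gauss-map linearity theorem for the rank-one subcase rather than attempt to re-derive it from the ruled structure.
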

\begin{proof}
It is easy to see that (cf. \cite[Lemma 1]{BF}), for a generic point $q\in\text{Tan}(M)$, where $q\in\mathbb{T}_p(M)$ for $p$ generic in $M$, $\mathbb{T}_{q}\big(\text{Tan}(M)\big)\subset\mathbb{T}^2_p(M)$. So $\text{dim}\text{Tan}(M)\leq n+1$ by the assumption. On the other hand, $\text{dim}\text{Tan}(M)\geq n$. We distinguish two different cases:
\begin{enumerate}
\item
$\text{dim}\text{Tan}(M)=n$, then all the $\mathbb{T}_{p}(M)$ coincide and thus $M=\mathbb{P}^n$.
\item
$\text{dim}\text{Tan}(M)=n+1$ and $\text{Tan}(M)=\mathbb{P}^{n+1}$, then $M$ is contained in this $\mathbb{P}^{n+1}$ and thus $n+1=N$, due to the nondegeneracy of $M$ in $\mathbb{P}^N$ \big(see the remarks after (\ref{linear system})\big).
\end{enumerate}

Now it suffices to rule out the possibility of
$\dim\text{Tan}(M)=n+1$ and $\text{Tan}(M)\neq\mathbb{P}^{n+1}$. Indeed, since $n\geq 2$,
in this case $\dim\text{Tan}(M)<2n$, the expected dimension. A beautiful result of Fulton and Hansen (\cite[Coro. 4]{FH79}) says that when $M$ is smooth and $\dim\text{Tan}(M)$ is strictly less than the expected dimension, one always has $\text{Tan}(M)=\text{Sec}(M)$. So $\dim\text{Sec}(M)=n+1$. On the other hand, by Zak's result on linear normality (\cite[Ch.II, Coro. 2.11]{Zak}), one has
$$ \dim\text{Sec}(M) \geq \frac{3}{2}n+1 = n+\frac{n}{2}+1 \geq n+2, $$
which of course contradicts $\dim\text{Sec}(M) = n+1$.
\end{proof}

\subsection{Completion of the proof}
It remains to show that, under the condition $s_2(\gamma^{\ast }S^{\ast})=0$, the inequality $d\leq n+1$ indeed holds. Note that $\gamma^{\ast }S^{\ast}$ is a quotient of the trivial bundle $\underline{\mathbb{C}}^{N+1}$, and recall from \cite{Li20} that  the induced metric on $\gamma^{\ast }S^{\ast}$ from the trivial one on $\underline{\mathbb{C}}^{N+1}$ is Bott-Chern nonnegative and hence its \emph{second Segre form} is nonnegative as a $(2,2)$-form. The condition $s_2(\gamma^{\ast}S^{\ast})=0$ then guarantees that this form is identically zero, which implies that $d\leq n+1$. For technical reason we do it on the dual bundle $\gamma^{\ast}S$, which is a subbundle of $\underline{\mathbb{C}}^{N+1}$. We shall carry out the details in the sequel.

Fix any $p\in M$. We can choose a basis $\sigma= \{ s_0, s_1, \ldots , s_N\} $ of $H^0(M,L)$ such that
$$s_0(p)\neq 0, \ \ s_1(p)=\cdots =s_N(p)=0, \ \ z^j:=\frac{s_j}{s_0},\ \ dz^1\wedge \cdots \wedge dz^n\big|_p\neq 0,$$ and
$$\frac{\partial z^{\alpha}}{\partial z^i} (p) =0 , \ \ \ \ \ \forall \ \ 1\leq i\leq n , \ \ \  \forall \ \ n+1\leq \alpha \leq N.$$
So $z=(z^1, \ldots , z^n)$ forms a local holomorphic coordinate system centered around $p$. As before the basis $\sigma$ gives us a holomorphic embedding $M \hookrightarrow {\mathbb P}^N$ and hence the Gauss map $\gamma : M \rightarrow G_{n+1}(\mathbb{C}^{N+1})$. Near $p$, the submanifold $M\subset{\mathbb P}^N$ is defined via equations
$$ z^{\alpha} = f^{\alpha}(z^1,\ldots,z^n), \ \ n+1\leq \alpha \leq N. $$
At the origin $p$, we have
$$f^{\alpha}(0)=0,\ \ \
f^{\alpha}_i (0):= \frac{\partial f^{\alpha} }{ \partial z^i}(0)=0, \ \ \
\forall \ \
n+1\leq \alpha \leq N,\ \
\forall \ \
1\leq i\leq n.$$

Let $\{ e_0, \ldots , e_N\}$ be the standard frame of the $(N+1)$-dimensional trivial bundle $\underline{\mathbb{C}}^{N+1}$ on $M$. A local frame of its subbundle $\gamma^{\ast}(S)$ near $p$ is given by $\{ X_0, X_1, \ldots , X_n\}$, where
\begin{eqnarray}
\left\{\begin{array}{ll}
X_i =e_i + \sum_{\alpha=n+1}^{N} f^{\alpha}_i e_{\alpha},~1\leq i\leq n,\\
~\\
X_0=e_0  + \sum_{\alpha=n+1}^{N} h^{\alpha }e_{\alpha},\\
~\\
h^{\alpha } := f^{\alpha} -  \sum_{j=1}^n z^jf^{\alpha}_j,~n+1\leq\alpha\leq N.
\end{array} \right.\nonumber
\end{eqnarray}

Now fix a flat metric $\langle,\rangle$ on $\underline{\mathbb{C}}^{N+1}$ so that $\{ e_0, \ldots , e_N\}$ is unitary. Denote its restricted metric on $\gamma^{\ast}S$ by $g$. Then the matrix of $g$ under the frame $\{ X_0, X_1, \ldots , X_n\}$ is
$$ g=(\langle X_i, \overline{X}_j\rangle )_{0\leq i,j\leq n} = I_{n+1} + F F^{\ast},$$
where
$$F=\begin{pmatrix}
h^{(n+1)} & h^{(n+2)} &\cdots &h^{N}\\
f^{(n+1)}_{1} & f^{(n+2)}_{1} &\cdots&f^{N}_{1}\\
\vdots&\vdots&\ddots&\vdots\\
f^{(n+1)}_{n}&f^{(n+2)}_{n}&\cdots&f^{N}_{n}
\end{pmatrix}.$$

Using the facts that $g(0)=I_{n+1}$, $dg(0)=(0)$, $F(0)=(0)$ and the entries of $F$ are holomorphic with respect to $z$, the curvature matrix $\Theta=(\Theta_{ij})_{0\leq i,j\leq n}$ of $g$ at the origin $p$ is given by
\be\begin{split}
\big(\Theta_{ij}\big)(p)&=\bar{\partial}\big[(\partial g)\cdot g^{-1}\big](0)\\
&=- \partial F \wedge (\partial F)^{\ast}(0)\\
&=-\big(\xi^{\alpha}_i\big)\wedge\big(\xi^{\beta}_{j}\big)^{\ast}(0)\\
&=\Big(-\sum_{\alpha=n+1}^{N}\xi^{\alpha}_i\wedge
\overline{\xi^{\alpha}_j}\,\Big)(0),
\end{split}\nonumber\ee
where $\xi^{\alpha}_0:=\partial h^{\alpha}$ and $\xi^{\alpha}_i:=\partial f^{\alpha}_i$ when $1\leq i\leq n$.

We now compute $s_2(\gamma^{\ast}S,g)$, the {\em second Segre form} of $\gamma^{\ast}S$ with respect to $g$, at $p$:
\be\label{second segre form}\begin{split}
&s_2(\gamma^{\ast}S,g)\\
=&\big[c_1(\gamma^{\ast}S,g)\big]^2-c_2(\gamma^{\ast}S,g)\\
=&\big[\text{tr}(\frac{\sqrt{-1}}{2\pi}\Theta)\big]^2-
\frac{\big[\text{tr}(\frac{\sqrt{-1}}{2\pi}\Theta)\big]^2-
\text{tr}\big[(\frac{\sqrt{-1}}{2\pi}\Theta)^2\big]}{2}\\
=&\frac{-1}{8\pi^2}\big[(\sum_{i=0}^n\Theta_{ii})^2+\sum_{0\leq i,j\leq n}\Theta_{ij}\Theta_{ji}\big]\\
=&\frac{-1}{4\pi^2}\big[\sum_{i=0}^n (\Theta_{ii})^2 + \sum_{0\leq i<j\leq n} (\Theta_{ii}\Theta_{jj} + \Theta_{ij}\Theta_{ji})\big]\\
=&\frac{1}{4\pi^2}\big[\sum_{i, \alpha , \beta} \xi^{\alpha}_i\wedge\xi^{\beta}_i\wedge \overline{\xi^{\alpha}_i\wedge\xi^{\beta}_i} + \frac{1}{2}\sum_{i<j, \alpha, \beta} (\xi^{\alpha}_i\wedge \xi^{\beta}_j - \xi^{\beta}_i\wedge\xi^{\alpha}_j) \wedge (\overline{\xi^{\alpha}_i\wedge \xi^{\beta}_j - \xi^{\beta}_i\wedge\xi^{\alpha}_j})\big]\\
=:&\frac{1}{4\pi^2}(A+B),
\end{split}\nonumber\ee
which is a nonnegative $(2,2)$-form as so are $A$ and $B$. Now if $s_2(\gamma^{\ast}S^{\ast})=0$, then $s_2(\gamma^{\ast}S)=0$ and so $A=B=0$.  At the origin we have
$$ \xi^{\alpha}_0(0)=\partial h^{\alpha}(0)=0, \ \ \xi^{\alpha}_i(0) = \partial f^{\alpha}_i (0) =: \sum_{j=1}^n f^{\alpha}_{ij}(0)dz^j.$$
Write $ H^{\alpha} = \big(f^{\alpha}_{ij}(0)\big)$ for the Hessian matrices. Our goal is to show that the linear space $W={\mathbb C} \{ H^{n+1}, \ldots , H^N\}$ spanned by  these Hessians at $p$ is at most one dimensional. For any $v\in V:={\mathbb C}^n$, let us write $H^{\alpha}_v = \xi^{\alpha}_v(0) = \sum_{i=1}^n v_i \xi^{\alpha}_i(0)$, which can be viewed as a vector in $V$ (under the coframe $dz^j$). Note that $A=B=0$ means
\begin{equation}
H^{\alpha}_v \wedge H^{\beta}_v=0, \ \ \ \forall \  n+1\leq \alpha, \beta \leq N, \ \forall \ v\in V. \label{parallel}
\end{equation}
Given any $u$, $v\in V$,  if we replace $v$ in (\ref{parallel}) by $u+tv$, where $t\in {\mathbb C}$, and look at the $t$-terms, we get
\begin{equation}
H^{\alpha}_u \wedge H^{\beta}_v  + H^{\alpha}_v \wedge H^{\beta}_u = 0. \label{parallel2}
\end{equation}
To see that $W$ has dimension at most one, first let us assume that there is an $\alpha$ such that the rank of $H^{\alpha}$ is at least $2$. Thus $H^{\alpha}_u \wedge H^{\alpha}_v \neq 0$ for generic $u, v \in V$. Let $U$ be the open dense subset of $V$ such that $H^{\alpha}_u \wedge H^{\alpha}_v \neq 0$ for any $u,v\in U$.  For each $v\in U$, since $H^{\alpha}_v\neq 0$, by the equation (\ref{parallel}), we know that there exists a unique constant $\lambda (v)$ such that $ H^{\beta}_v = \lambda (v) H^{\alpha}_v$. Also, for any $u$, $v$ in $U$, by (\ref{parallel2}) we get
\begin{equation}
(\lambda (u)-\lambda (v)) H^{\alpha}_u \wedge H^{\alpha}_v  =0,
\end{equation}
hence $\lambda (u) = \lambda (v)$. This means that $\lambda$ is a constant function on $U$, hence we have $H^{\beta}=\lambda H^{\alpha}$. That is, any other Hessian is a constant multiple of this $H^{\alpha}$.

Now we are left with the case when each $H^{\alpha}$, or any of there linear combinations, has rank at most one. We know that for each $i$, the $i$-th rows of these matrices are all parallel, and all these matrices are symmetric, so each of them is a constant multiple of $\,v\,^t\!v$ for some fixed column vector $v$ in $V$. So all these Hessian matrices at $p$ form a linear space of dimension at most one. By definition, this means exactly that the second osculating space $\mathbb{T}^2_p(M)$ at $p$ is at most $(n+1)$-dimensional. This completes the proof of Theorem \ref{level2equality}.

\end{document}